\documentclass[11pt]{article}
\usepackage{graphicx}
\usepackage{amsmath, amsthm, amssymb}
\usepackage{mathrsfs,times}
\title{Geometric ergodicity of a bead-spring pair with stochastic Stokes forcing}
\author{Jonathan C. Mattingly\thanks{Department of Mathematics, Duke University, Durham, NC.}, Scott A. McKinley{\thanks{Department of Mathematics, University of Florida, Gainesville, FL.}}, Natesh S. Pillai{\thanks{Department of Statistics, Harvard University, Cambridge, MA.}}}

\begin{document}

\newcounter{assume}
\newcounter{other}
\newcounter{condition}

\newtheorem{theorem}{Theorem}[section]
\newtheorem{prop}[theorem]{Proposition}
\newtheorem{defn}[theorem]{Definition}
\newtheorem{corollary}[theorem]{Corollary}
\newtheorem{lemma}[theorem]{Lemma}
\newtheorem{fact}[theorem]{Fact}

\theoremstyle{plain}
\newtheorem{cond}[condition]{Condition}

\theoremstyle{remark}
\newtheorem{remark}[theorem]{Remark}

\theoremstyle{plain}
\newtheorem{assumption}[assume]{Assumption}

\setlength{\unitlength}{1 cm}

\def\drag{\gamma}
\def\viscosity{\nu}

\def\Av{\mathbf{A}}
\def\cv{\mathbf{c}}
\def\ev{\mathbf{e}}
\def\fv{\mathbf{f}}
\def\Fv{\mathbf{F}}
\def\gv{\mathbf{g}}
\def\hv{H}
\def\jv{j}
\def\kv{k}
\def\kvperp{k^\perp}
\def\Kv{\mathbf{K}}
\def\mv{M}
\def\Ov{\mathbf{O}}
\def\rv{R}
\def\rvdot{\dot{\mathbf{r}}}
\def\uv{u}
\def\Uv{\mathbf{U}}
\def\vv{\mathbf{v}}
\def\Vv{\mathbf{V}}
\def\xv{X}
\def\Xv{X}
\def\yv{Y}
\def\zv{Z}
\def\wv{\mathbf{w}}
\def\Wv{W}
\def\rhov{\boldsymbol{\rho}}
\def\fluidv{u}
\def\fluidvhat{\hat u}
\def\noisev{\mathbf{F}}
\def\brownv{\mathbf{B}}
\def\ouvcos{\mathbf{y}}
\def\ouvsin{\mathbf{z}}
\def\zerov{0}
\def\xiv{\boldsymbol{\xi}}
\def\zetav{\boldsymbol{\zeta}}
\def\length{\lambda}
\def\rgood{\mathcal{G}_r}
\def\zgood{\mathcal{G}_z}
\def\Lambdav{\Lambda}
\def\kmin{\hat k}

\def\rhomax{\rho_{max}}
\def\rhogood{\rho_+}
\def\Phifene{\Phi_{\text{FENE}}}

\def\imat{\mathbf{I}}

\def\abb{\mathbb{A}}
\def\kbb{\mathbb{K}}
\def\lbb{\mathbb{L}}
\def\pbb{\mathbb{P}}
\def\rbb{\mathbb{R}}
\def\tbb{\mathbb{T}}
\def\xbb{\mathbb{X}}
\def\zbb{\mathbb{Z}}
\def\torus{\mathbb{T}}
\def\modeset{\mathcal{K}}
\def\ocal{\mathcal{O}}
\def\dcal{\mathcal{D}}

\def\f{\varphi}
\def\Ep{\mathcal{E}}
\def\ep{\epsilon}
\def\gen{\mathcal{L}}
\def\indicator{1}
\def\kvsq{|\kv|^2}
\def\projop{\Big(I - \frac{\kv \otimes \kv}{\kvsq}\Big)}
\def\ddt{\frac{d}{dt}}
\def\cpt{\mathcal{C}}
\def\moll{\psi}

\def\pcal{\mathcal{P}}
\def\rcal{\mathcal{R}}
\def\good{\mathcal{G}}
\def\bad{\mathcal{B}}
\def\tkernel{\mathcal{P}}
\def\state{\mathcal{X}}
\def\statemeas{\mathcal{E}}

\def\factname{Fact }

\newcommand{\vect}[2]{\binom{#1}{#2}}

\newcommand{\E}[1]{\mathbb{E}{\left[ #1\right]}}
\newcommand{\Ex}[2]{\mathbb{E}_{#1}{\left[ #2\right]}}
\newcommand{\p}[1]{\mathbb{P}{\left\{ #1\right\}}}
\newcommand{\px}[2]{\mathbb{P}_{#1}{\left\{ #2\right\}}}
\newcommand{\borel}[1]{\mathscr{B}(#1)}
\newcommand{\cntr}{\cpt}
\newcommand{\ip}[1]{{\langle#1\rangle}}
\renewcommand{\P}{\mathbf{P}}

\newcommand{\Norm}[1]{|||#1|||}
\newcommand{\norm}[1]{\|#1\|}
\newcommand{\cP}{\mathcal{P}}
\newcommand{\cR}{\mathcal{R}}
\newcommand{\cB}{\mathcal{B}}
\newcommand{\cC}{\mathcal{C}}
\newcommand{\cS}{\mathcal{S}}
\newcommand{\cI}{\mathcal{I}}
\newcommand{\cF}{\mathcal{F}}
\newcommand{\cK}{\mathcal{K}}
\newcommand{\cE}{\mathcal{E}}
\newcommand{\Cm}{\cH}
\newcommand{\one}{\mathbf{1}}
\newcommand{\cH}{\mathcal{H}}
\newcommand{\Z}{\mathbf{Z}}
\newcommand{\dd}{\rho}
\renewcommand{\d}{d}
\newcommand{\rhoplus}{\rho_{+}'}

\newcommand{\T}{\mathbf{T}}
\newcommand{\N}{\mathbf{N}}
\newcommand{\R}{\mathbf{R}}
\newcommand{\X}{\mathbf{X}}
\newcommand{\ccdot}{\;\cdot\;}
\newcommand{\eqdef}{\stackrel{\mbox{\tiny def}}{=}}

\renewcommand{\theenumi}{\roman{enumi}}
\renewcommand{\theenumii}{\roman{enumii}}
\renewcommand{\theenumiii}{\arabic{enumiii}}
\renewcommand{\labelenumi}{(\theenumi)}

\maketitle

\begin{abstract}
We consider a simple model for the fluctuating hydrodynamics of a
flexible polymer in dilute solution, demonstrating geometric
ergodicity for a pair of particles that interact with each other
through a nonlinear spring potential while being advected by a
stochastic Stokes fluid velocity field.  This is a generalization of
previous models which have used linear spring forces as well as
white-in-time fluid velocity fields.

We follow previous work combining control theoretic arguments,
Lyapunov functions, and hypo-elliptic diffusion theory to prove
exponential convergence via a Harris chain argument. In addition we
allow the possibility of excluding certain ``bad'' sets in phase space in
which the assumptions are violated but from which the system leaves
with a controllable probability. This allows for the treatment of
singular drifts, such as those derived from the Lennard-Jones
potential, which is a novel feature of this work.
\end{abstract}

\section{Introduction}

The study of polymer stretching in random fluids has been identified
as a first step in the much larger project of modeling and
understanding drag reduction in polymer solutions
\cite{2000-chertkov-polymer-stretching} and theoretical focus has
been brought on the dynamics of simple dumbbell models
\cite{2002-delucia-dummbbell-laminar-flow}, \cite{2005-celani},
\cite{2005-afonso}.  Of particular interest is the experimentally
observed phenomenon called the coiled state / stretched state phase
transition \cite{2005-gerashchenko}.  Mathematically this transition
has been characterized by seeking models which admit solutions that
are ergodic for only certain regions of parameter space \cite{2005-celani}. In this paper we
address the topic of how to prove ergodicity for a wide range of
models that generalize preceding work.

Let $\xv_1(t)$ and $\xv_2(t)$ denote the respective positions in
$\rbb^2$ of two polymer ``beads'' connected by a ``spring'' at time
$t$. Depending on the scale of interest, these beads may be thought of as consecutive segments (consisting of something like 50 monomers) in a polymer chain \cite{doi-edwards,1996-ottinger-book}, or as the ends of a full polymer chain \cite{bird,2005-celani,2005-afonso}. Having made this caveat, the canonical Langevin model for two spherical particles in a passive polymer system is given by
\begin{equation} \label{eq:general-model}
m \ddot \xv_i = - \nabla_{X_i} \Phi(\xv_1-\xv_2) + \zeta
(\uv(\xv_i(t),t) - \dot \xv_i(t)) + \kappa \dot \Wv(t)\;
\end{equation}
for $i = 1,2$. The mass $m$ is considered to be vanishingly small and so the
inertial term, $m \ddot \xv_i$, will be ignored. On the right hand
side, the first term is the restorative force exerted on the beads
due to the potential energy of the polymer's current configuration.
The function $\Phi$ denotes the configuration potential for the two beads.
The second term is an expression for the drag force exerted by a
time-dependent fluid velocity field $\uv$ with friction coefficient
$\zeta := 6 \pi a \eta$. This follows from the Stokes drag law for a
spherical particle of radius $a$ in a fluid with viscosity $\eta$.
The final term is the force due to thermal fluctuations in the
fluid where $W(t)$ is a standard Brownian motion. 
The diffusive constant $\kappa$ is often taken to be $\kappa
= \sqrt{2 k_B T \zeta}$, where $k_B$ is the Boltzmann constant and
$T$ is the temperature of the system in Kelvin, in accordance with
the fluctuation-dissipation theorem \cite{2005-celani}.

The goal of the present work is to achieve rigorous results about
the ergodicity of the \emph{connector} process
$$
\rv(t) := \frac{1}{2} (\xv_1(t) - \xv_2(t))
$$
in both $\kappa = 0$ and $\kappa \ne 0$ regimes with nonlinear
spring interaction in the presence of a spatially and temporally
correlated incompressible fluid velocity field.

\bigskip

In the simplest possible setting, one ignores the fluid and assumes a
Hookean (quadratic) spring potential $\Phi$. In this case, equation
\eqref{eq:general-model} is a simplification of the classical Rouse
model \cite{doi-edwards}. For the choice of 
$\Phi(r) = \frac{\gamma}{2}|r|^2$
the particle
dynamics satisfy the system of SDE
\begin{align*}
d \xv_1(t) &= \gamma \left[\xv_2(t) - \xv_1(t) \right] dt + \kappa \, d\Wv_1(t) \\
d \xv_2(t) &= \gamma \left[\xv_1(t) - \xv_2(t) \right] dt + \kappa
\, d\Wv_2(t)
\end{align*}
where $\Wv_1$ and $\Wv_2$ are independent standard Brownian motions.
The dynamics of the connector $\rv(t)$ are given by
$$
d \rv(t) = - 2 \gamma \rv(t) + \frac{\kappa}{\sqrt{2}} d \Wv(t).
$$
where $\Wv = \frac{1}{\sqrt{2}}(\Wv_1 - \Wv_2)$ is a standard Brownian
motion. We see that each of the connector components is an
Ornstein-Uhlenbeck process which therefore has the unique invariant
measure $ R^i(t) \sim N\left(0, \frac{\kappa^2}{8\gamma}\right).$
This exactly solvable model does not yield physical results, so one
must adopt nonlinear models for either or both of the spring
potential and fluid forces.

Significant theoretical advances exist for the dynamics of a single
tracer particle convected by a wide variety of fluid models
\cite{1999-kramer-majda}. One popular fluid model for
non-interacting two-point motions
\cite{2007-bec-heavy-particles-incompressible-flows}
\cite{2005-mehlig-aggregation} as well as for Hookean bead-spring
systems \cite{2000-chertkov-polymer-stretching,
2002-delucia-dummbbell-laminar-flow, 2005-celani} is a
time-dependent random field satisfying the statistics of the
Kraichnan-Batchelor ensemble \cite{1959-batchelor}
\cite{1968-kraichnan}. Such a fluid is still statistically white in time, but is
colored in space.

In the case where $\kappa = 0$ with non-interacting beads, the
spatial correlations in the convecting fluid velocity field allow
for concentration and aggregation phenomena
\cite{2002-sigur-preferential} \cite{2005-mehlig-aggregation}
\cite{2007-bec-heavy-particles-incompressible-flows}. This happens
because when the two beads are very close together, the fluid forces
on the respective beads are so strongly correlated there is no force
encouraging separation.

The presence of a diffusive term with $\kappa \ne 0$ prevents such
aggregation and the long term behavior of the connector depends on
so-called Weissenberg number $\text{Wi} = \zeta / 2 \gamma = \kappa^2 / 4 k_B T \gamma$
\cite{2005-celani}.  It is shown that when $\text{Wi} < 1$ the connector
$\rv$ will have a non-trivial stationary distribution, dubbed the
``coiled'' state. For $\text{Wi} > 1$, the connector does not have a
stationary distribution and is called ``stretched.''  The authors
express interest in the case where the fluid is not assumed to be
white-in-time.

\bigskip

In this work we use the incompressible stochastic Stokes equations to generate a fluid that is colored in space and time (see Section \ref{sec:theModel}). In the Hookean spring case (among other potentials with no repulsive force between the beads) with $\kappa = 0$, this model leads to degenerate dynamics (Proposition \ref{thm:hookean}). However, in a more general setting with a nonlinear spring potential that includes a repulsive force, we show that dynamics are nondegenerate, although the coiled / stretched state dichotomy discussed in \cite{2005-celani} is not present. We find that $\rv(t)$ is ergodic regardless of the physical parameters (Theorem \ref{thm:x-ergodicity}).

The method used here to establish ergodicity builds on the Harris
Chain theory developed in \cite{HarrisESMMP56,Hasminskii80,Num84}.
It is particularly indebted to the uniform ergodic results in
weighted norms developed in \cite{MeynTweedie93,MeynTweedie95}. The
argument follows the path outlined in \cite{2002-jcm-stuart, 2002-jcm-stuart-higham} for unique ergodicity of degenerate diffusions, but requires some nontrivial extensions to deal with the
multiplicative nature of the noise and to permit the type of
singular vector fields that arise as natural choices for the spring
potential $\Phi$.  We build a framework around a general ergodic
result from \cite{HairerMattingly08YAH} and then develop the needed
analysis to apply this framework.

Mathematically, as in \cite{2002-jcm-stuart-higham,2002-jcm-stuart},
this paper combines control theory with techniques from the theory
of hypoelliptic diffusions to invoke results in the spirit of
\cite{MeynTweedie93,MeynTweedie95}.  Ergodicity is obtained by
proving a minorization condition on a class of ``small sets'' (see
\cite{MeynTweedie93,MeynTweedie95}) while simultaneously 
establishing a matching
Lyapunov function. However, our problem has  a number of
difficulties which prevent the application of the results
\cite{2002-jcm-stuart-higham} directly. A central issue that needs to be
addressed is that the spring
potential, and hence the drift term, is permitted to have a singularity (Assumption
\ref{a:potential-singularity}). Therefore the natural candidates for
``small sets'' are not compact. This difficulty is overcome by
splitting the small sets into ``good'' and ``bad'' sets. On the
compact ``good'' set, defined in Eq.~\eqref{eq:defn-good}, we
demonstrate uniform  controllability as in
\cite{2002-jcm-stuart-higham,2002-jcm-stuart}. On the bad set, one
cannot obtain uniform control; however, the deterministic dynamics
move the system into the good set in finite time so that geometric
ergodicity still holds (Section~\ref{sec:ControlGoodBad}). Allowing
the spring potential to be singular extends the applicability of the
theory to many interesting, physically important potentials such as
the Lennard-Jones potential. Related ideas have been also recently
been used to prove ergodic and homogenization results in different
settings (see \cite{bubak09,HairerPardoux07}).

\subsection{Structure of paper and overview of results}

We will conclude Section 1 by proposing the model, leaving the proof of global existence and uniqueness to the Appendix.  It is important to point out that without a repulsive force between the beads, this model is degenerate. As an example, we consider in
Proposition~\ref{thm:hookean} a pair of particular choices -- including the Hookean spring model -- for the spring potential that do not introduce a repulsive force between the beads.  We find that the
distance between the beads $\rv(t)$ almost surely tends to $\zerov$
as $t \rightarrow \infty$ if the spring constant $\gamma$ is
sufficiently strong relative to a quantity that depends on the
typical spatial gradients in the random forcing.

In Section~\ref{sec:ergodicity}, we quote an abstract result from the classical ergodic theory literature. The quoted result requires proving a minorization condition and the existence of a Lyapunov function. Section~\ref{sec:measureTheoreticIrred} contains a general prescription for how to deduce the minorization condition from the existence of a continuous transition density and a weak form of topological irreducibility for the Markov process. In Section~\ref{sec:ControlGoodBad} the needed topological irreducibility is proven via a control theoretic argument. In Section~\ref{sec:Hormander} we invoke H\"ormander's ``sum of squares'' theorem to prove that the associated hypoelliptic diffusion has a smooth transition density. Section~\ref{sec:Lyap} contains the calculations establishing the existence of a Lyapunov function and Section~\ref{sec:generalizations} contains a number of generalizations and implications of the preceding results. The appendix contains the derivation of the model used.

\bigskip

Before preceding, we note that among the class of models we propose, the closest to that of Celani, et.~al. \cite{2005-celani} is the canonical Langevin Equation \eqref{eq:general-model} where the spring potential is quadratic, the mass $m$ is still 0, but the coefficient of the Brownian motion is nonzero: $\kappa = \sqrt{2 k_B T \zeta}$.  Our generalization is the replacement of the Kraichnan-ensemble with a finite-dimensional version of the stochastic Stokes equations. In this $\kappa > 0$ setting, the dynamics when $|\rv|$ is small become greatly simplified. Indeed, when the force separating the beads due to the fluid velocity becomes negligible, the remaining terms constitute an Ornstein-Uhlenbeck process. By standard ergodic properties of such processes, $\rv$ quickly leaves any small neighborhood of the origin with probability 1. For large values of $|\rv|$, the quadratic spring potential dominates and the Lyapunov function calculation we present in Section \ref{sec:Lyap} still holds. Since the diffusion is elliptic, existence of a continuous transition density follows trivially, and all arguments in the derivation of the stochastic $\delta$-ball controllability still apply, and thus the ergodic theorem we present in this work holds for $\rv(t)$.

This stands in contrast to the results in \cite{2005-celani} where it was argued that there exists a range of parameters where no stationary distribution exists. Furthermore, in light of the results we present here, it is not clear to us how to construct a model with colored-in-space-and-time fluid velocity field that supports the ``stretched'' and ``coiled'' regimes cited in the physics literature.  Unfortunately, we cannot comment directly on the model presented in \cite{2005-celani}, as our approach is highly dependent on the ability to express the dynamics in terms of a system of SDEs.

\subsection{Definition of the model}
\label{sec:theModel}

In the overdamped, highly viscous regime, it is reasonable to
neglect the nonlinear term in Navier-Stokes equations. Following
\cite{1989-ottinger-rabin}, \cite{2002-jcm-stuart},
\cite{2002-jcm-stuart-higham} and \cite{2002-sigur-inertial}, we
consider the bead-spring system advected by a random field $u: \rbb^2 \times \rbb \to \rbb^2$
satisfying the incompressible time-dependent stochastic Stokes
equations.  Following \cite{1986-walsh}, \cite{1992-daprato-zab},
\cite{1999-dalang} and \cite{2006-mckinley} the stochastic PDE
\begin{align}
\label{eq:defn-stokes} \partial_t \fluidv(x,t) - \viscosity\Delta
\fluidv(x,t) + \nabla p(x,t) &= F(d x, dt), \quad
 \nabla \cdot \fluidv(x,t) = 0
\end{align}
is well defined under the following conditions.  For technical
simplicity in the ergodicity arguments to come, we take $u$ to be spatially periodic with period $L$ which is presumed to be very large. We take the
space-time forcing $F:\rbb^2 \times \rbb \to \rbb^2$ to be a white-in-time, spatially periodic and colored-in-space Gaussian process satisfying
\begin{equation} \label{eq:defn-noise-stat}
\E{F(x,t)} = \zerov, \quad  \E{F^i(x,t) F^j(y,s)} = (t
\wedge s) \, 2 k_B T \nu \delta_{ij} \Gamma(x - y)
\end{equation}
where $\Gamma$ is the spatial covariance function, $\nu$ is the viscosity of the fluid, $t \wedge s$ denotes the minimum of $t$ and $s$, the component indices $i$ and $j$ are $i,j \in \{1, 2\}$ and $\delta_{ij}$ is a Kronecker delta function. 
As is shown in Appendix \ref{sec:model},
we may take the definition of the noise to be
\begin{equation} \label{eq:defn-noise}
F(x,t) = \frac{\sqrt{2 k_B T \nu}}{L} \! \sum_{\kv \in \zbb^2
\setminus \zerov} \! \left(\cos\big(\lambda \kv \cdot x\big) B_{k}^1(t) +  \sin\big(\lambda \kv \cdot x\big) B_{k}^2(t)\right) \! \sigma_{\kv} \;
\end{equation}
where we have introduced the inverse length scale $\lambda = 2 \pi / L$ and the $B_{\kv}^i$ are independent standard 2-$d$ Brownian
motions. The coefficients $\sigma_\kv$ are related to the spatial
correlation function $\Gamma$ through the Fourier relation $
\Gamma(x) = \frac{2}{L^2} \! \sum_{k \in \zbb^2 \setminus
\{\zerov\}} \! \! \cos\big(\lambda k \cdot x\big) \sigma_{\kv}^2.$

This relation is possible because $\Gamma$ is a covariance function,
and therefore positive definite. By Bochner's Theorem, $\Gamma$ is
realizable as the Fourier inverse transform of a positive real valued measure
called the \emph{spectral} measure. Often one defines the correlation structure on the spectral domain. 
For clarity of exposition, we take the set of modes with nonzero $\sigma_{\kv}$, denoted $\modeset \subset \zbb^2\setminus (0,0)$ to be finite but containing at least three linearly independent vectors. We use $N = |\modeset|$ to denote the number of active modes. 

As is discussed in the Appendix, Section \ref{sec:model}, we can express the dynamics of the eigenmodes in terms of the family of independent 1-dimensional Ornstein-Uhlenbeck processes $Z(t) := \{Z_k(t)\}_{k \in \modeset}$ respectively satisfying
\begin{equation} \label{eq:defn-z}
d Z_\kv(t) = - \length^2 \viscosity \kvsq Z_\kv(t) dt + \sqrt{2
\beta \nu} \length \sigma_\kv \,d W_\kv(t)
\end{equation}
where $\beta = k_B T / 4 \pi^2$ and $\{W_k\}_{\kv \in \modeset}$ is a family of iid standard 1-dimensional Brownian motions. For each $k$, we take the
initial condition $Z_k(0)$ to be chosen from its respective stationary
distribution, namely $Z_k(0) \sim \text{N}\left(0, \beta \sigma_\kv^2 / \kvsq \right).$

Our goal will be to rigorously analyze the long-term behavior
of the connector process $\rv$ whose dynamics we will study via an approximate system which is derived in the Appendix, Section \ref{sec:model}.  This entails writing $X_1$ and $X_2$ in terms of the configuration vector $R(t)$  and the ``center of mass'' process $M(t)$.  As is discussed in that development, we set $M(t)=0$ to substantially simplify subsequent calculations.  We argue that this assumption can be removed and that all of the relevant results hold for the original system.  

We now define our model for $R(t)$.  Given the family $Z(t)$ defined by \eqref{eq:defn-z}, let $R:\rbb \to \rbb^2$ satisfy the time-inhomogeneous ODE
\begin{equation} \label{eq:defn-r}
\ddt \rv(t)  = - \nabla \Phi(\rv(t)) + \sum_{\kv \in \modeset} \sin\!\left(\length \, \kv \cdot R(t)\right) \frac{\kvperp}{|\kv|} Z_k(t)
\end{equation}
where for a given vector $\kv = (k_1, k_2)$ we denote $\kvperp := (-k_2,k_1)$. The configuration potential $\Phi: \rbb^2 \to \rbb$ is discussed below in Assumption~\ref{a:potential-singularity}. The last term of \eqref{eq:defn-r} summarizes the influence of the fluid on the separation between the beads.  We will write this in terms of the multiplication of the $2\times N$ Stokes matrix $S(r)$ by the vector $z = (z_1, \ldots z_N)$, 
\begin{equation} \label{eq:defn-stokes-matrix}
	S(r)z := \sum_{\kv \in \modeset} \sin\!\left(\length \, \kv \cdot r\right) \frac{\kvperp}{|\kv|} z_k\,.
\end{equation}
We discuss the existence and uniqueness of the ODE \eqref{eq:defn-r} in Appendix \ref{sec:exist-unique} and will think of the solution $R$ with initial condition $r_0$ in terms of the mapping  
\begin{equation} \label{defn:Psi}
	R := \Psi(r_0,Z)
\end{equation}
where $\Psi: \rbb^2 \times C([0,\infty),\rbb^N) \to C([0,\infty),\rbb^2)$ is the solution of the ODE given in \eqref{eq:defn-r}.

As mentioned earlier, the choice of quadratic potential $\Phi$
corresponds to a Hookean spring model. There are a number of canonical choices for nonlinear spring potentials (see \cite{bird} Table 10.1-1) but of particular interest to us are potentials which only allow for a finite maximum extension of the polymer.  One common choice is known as the \emph{finite extensible nonlinear elastic} (FENE) \cite{bird,2005-afonso,2003-thiffeault} potential:
\begin{equation} \label{eq:defn-fene}
	\Phifene(r) = \frac{\gamma \rhomax^2}{2} \ln\left(\frac{1}{1 - |r|^2/\rhomax^2}\right).
\end{equation}
The parameter $\rhomax > 0$ is the maximal extension of the chain.
However, because there is no repulsive force in the potential, we find that systems with these potentials have degenerate dynamics (Proposition \ref{thm:hookean}). 
In the sequel, we place the following assumptions on the spring potential.

\begin{assumption} \label{a:potential-singularity}
Let $0 < \rho_{max} \leq \infty$ be given and define
$$
\dcal := \{r \in \rbb^2 \mbox{ such that } |r| \leq \rho_{max}\}.
$$
We assume that the spring potential $\Phi: \dcal \to \rbb_+$ satisfies $\Phi(0) = 0$ and each of the following conditions.
\begin{enumerate}
	\item \emph{Radial symmetry.}  For some continuously differentiable function $\phi: (0,\rhomax) \to \rbb_+$, we have
	\begin{equation} \label{eq:defn-phi-radial}
		\Phi(r) = \phi(|r|).
	\end{equation}
	\item \emph{Locally Lipschitz gradient.} For any compact region $K \subset \dcal \setminus \{0\}$, there exists a constant $C > 0$ such that for all $r_1, r_2 \in K$,
	$$
	|\nabla \Phi(r_1) - \nabla \Phi(r_2)| \leq C |r_1 - r_2|.
	$$
	\item \emph{Compact level sets.} For every $\rho \geq 0$, the set $ \{r \in \dcal \mbox{ s.t. } \Phi(r) \leq \rho\} $ is compact.
	\item \label{cond:K} \emph{Growth condition.} The potential satisfies $\displaystyle \lim_{|r| \to \rhomax} \Phi(r) = \infty $
	and there exists a $\gamma > 0$ and a $\rho_0 < \rhomax$ such that for all $r \in \dcal$ with $|r| \in (\rho_0,\rhomax)$ 
	\begin{equation} \label{eq:spring-large-r}
		|\nabla \Phi(r)|^2 \geq \gamma \Phi(r).
	\end{equation}
	\item \label{cond:repulsive}\emph{Repulsive force at the origin.} There exists $\gamma_0 > 0$ and $\epsilon_0> 0$ such that for all
	$r \in \dcal \setminus \{0\}$ with $|r| \leq  \epsilon_0$
	\begin{equation} \label{eq:spring-small-r} 
		- \nabla \Phi(r) \cdot r \geq \gamma_0.
	\end{equation}
\end{enumerate}
\end{assumption}
\begin{remark}
It is in this context that we choose the length of the periodicity of the forcing fluid. We take $L \gg 4 \rho_0$.
\end{remark}

We have in mind potentials that consist of standard choices when the beads are separated by large distances, but that have a singularity at zero. For example, the above assumptions include the
families of functions
\begin{equation} \label{eq:defn-lennard-jones}
\Phi(r) = \frac{1}{2q} |r|^{2q} + \frac{1}{\alpha |r|^\alpha}, \quad \text{ and } \qquad 
	\Phi(r) = \Phifene(r) + \frac{1}{\alpha |r|^\alpha}.
\end{equation}
where $\alpha$ is a positive constant. The choice $\alpha = 12$ corresponds to a Lennard-Jones singularity at zero.  One can check that the Growth Condition \eqref{cond:K} is satisfied for such potentials if and only if $q \geq 1$.

\section{Ergodicity}\label{sec:ergodicity}

In order to state our main result, we must set some notation. Let $\Xv(t) =
(\rv(t),\zv(t))$ satisfy the system given by \eqref{eq:defn-z} and \eqref{eq:defn-r}.  It follows from Proposition \ref{prop:exist-unique} that the process $\Xv(t)$ is
Markov and well-defined on the state space
\begin{equation*}
  \xbb := \big\{(r, z) \in \dcal \times \rbb^N\big\}\;.
\end{equation*}
For a bounded, measurable function $\f: \xbb \to \rbb$, we define the
action of the Markov semigroup $\cP_t$ by
$$
(\cP_t \f)(x) = \Ex{x}{\f(\Xv(t))}\;.
$$

To measure convergence to equilibrium, we introduce the following
weighted norm on such functions $\f$ relative to a given Lyapunov
function $V:\xbb \to [0,\infty)$,
$$
\|\f\| := \sup_{x \in \xbb} \frac{|\f(x)|}{1 + V(x)}\,.
$$
We note that the Markov semigroup $\cP_t$ can be extended to act on all functions $\f$ bounded pointwise above by $V$. Henceforth, we will use  
\begin{equation} \label{eq:defn-lyapunov}
V(x) := \moll(\Phi(r)) + \eta |z|^2 \,
\end{equation}
as the Lyapunov function for the Markov process $X(t)$, where $\psi:\rbb \to \rbb$ is the function 
\begin{equation} \label{eq:defn-moll}
	\moll(x) := \left\{\begin{array}{cl} 0, & 0 \leq x \leq a \\ c \, (x - a) \,e^{-1/(x-a)^2}, & x > a
\end{array} \right.,
\end{equation} 
where we set $a = \phi(\rho_0)$. The constant $\rho_0$ is as in Equation \eqref{eq:spring-large-r} of Assumption \ref{a:potential-singularity}, and the constants $c$ and $\eta$ are set by an argument in Section \ref{sec:Lyap}. The essential properties of $\psi$ are recorded in Section \ref{sec:psi}.

The main result of this article is the following statement about the geometric ergodicity of the Markov process $\Xv$, which in turn implies the connector process $\rv$ converges to its unique non-trivial stationary distribution in exponential time.

\begin{theorem} \label{thm:x-ergodicity}
Suppose that the set of active modes $\modeset$ is finite, but
contains at least three pairwise linearly independent vectors, and
let the spring potential $\Phi$ satisfy Assumption
\ref{a:potential-singularity}. Then there exists a unique
non-trivial invariant measure $\pi$  and constants $C >0$ and
$\lambda > 0$ so that for all measurable $\f:\xbb\to \rbb$ with
$\|\f\| < \infty$, we have
\begin{align*}
    \|\cP_t \f - \pi\f \| \leq C e^{-\lambda t} \|\f\|
\end{align*}
where $\pi \f = \int \f d\pi$.
\end{theorem}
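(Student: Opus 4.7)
The plan is to invoke the abstract Harris-type theorem from \cite{HairerMattingly08YAH} mentioned in Section~\ref{sec:ergodicity}, which delivers geometric ergodicity in the weighted norm once two ingredients are in hand: (a) a Lyapunov drift condition of the form $\cP_T V \leq \alpha V + K$ with $\alpha \in (0,1)$ and $K < \infty$, and (b) a uniform minorization $\cP_T(\xv,\cdot) \geq \eta\, \nu(\cdot)$ for all $\xv$ in the sublevel set $\{V \leq R\}$ with $R$ sufficiently large. The Lyapunov function is the $V$ already displayed; its drift bound (Lemma~\ref{lem:lyapunov}) is a fairly routine It\^o calculation using that each $z_\kv$ is an Ornstein--Uhlenbeck process with contraction rate $\length^2\viscosity\kvsq$ and that \eqref{eq:spring-large-r} in Assumption~\ref{a:potential-singularity} yields a coercive $-\gamma|\rv|^2$ contribution in the $\rv$-sector, with the fluid forcing $\Uv$ absorbed through the pathwise bound \eqref{eq:U-estimate-omega-by-omega} and a suitable choice of the parameter $\eta$.

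The subtle ingredient is the minorization, since $\{V\leq R\}$ is not compact: it accumulates at $\rv=\zerov$, where the drift $-\nabla\Phi$ is singular. I would therefore split it into a good set $\good = \{V\leq R\} \cap \{|\rv|\geq \epsilon\}$ and a bad set $\bad = \{V\leq R\} \cap \{|\rv| < \epsilon\}$, with $\epsilon$ furnished by Corollary~\ref{cor:nonlinear-near-zero}. On the compact $\good$, which sits uniformly away from the singularity, the coefficients of \eqref{eq:defn-r}--\eqref{eq:defn-z} are smooth and bounded, so the classical two-step argument from \cite{2002-jcm-stuart,2002-jcm-stuart-higham} applies:
\begin{enumerate}
\item \emph{Topological irreducibility on $\good$} (Section~\ref{sec:ControlGoodBad}): a control-theoretic argument shows that, given any two points of $\good$, the deterministic control system obtained by replacing each $d W_\kv$ by a smooth control can be steered from one to a neighborhood of the other in some fixed time $T_0$. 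The hypothesis that $\modeset$ contains at least three pairwise linearly independent vectors is precisely what supplies enough actuation on the $\rv$-component through the sines appearing in \eqref{eq:defn-r}.
\item \emph{Smooth density} (Section~\ref{sec:Hormander}): H\"ormander's sum-of-squares theorem applied to the vector fields of \eqref{eq:defn-r}--\eqref{eq:defn-z} gives, at every $\xv \in \xbb$, a Lie algebra of full rank and hence a smooth positive transition density for $\cP_t(\xv,\cdot)$ for every $t>0$.
\end{enumerate}
Following the prescription in Section~\ref{sec:measureTheoreticIrred}, these two facts combine to yield, at some time $T_1 \geq T_0$, a pointwise lower bound on the transition density on a common small ball of $\xbb$, hence a uniform minorization $\cP_{T_1}(\xv,\cdot) \geq \eta\,\nu(\cdot)$ for all $\xv \in \good$.

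The final step is to absorb $\bad$, which is the role of Corollary~\ref{cor:nonlinear-near-zero}. Choosing $M$ and $\tilde M$ so that the slab $\{\|\zv\| < \tilde M\}$ lies inside the $\zv$-section of $\{V\leq R\}$, the corollary guarantees that from any $\xv_0 \in \bad$ the process enters $\{|\rv|\geq\epsilon\}\cap\{\|\zv\|<\tilde M\}\subset \good$ within time $1$ with probability at least some $\alpha>0$, uniformly in $\xv_0$. Composing this one-step escape with the minorization on $\good$ via the strong Markov property produces $\cP_{T_1+1}(\xv,\cdot) \geq \alpha\eta\,\nu(\cdot)$ uniformly on all of $\{V \leq R\}$. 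With the Lyapunov bound and this minorization held at a common time, the abstract theorem of \cite{HairerMattingly08YAH} yields a unique invariant measure $\pi$ and constants $C,\lambda>0$ with $\|\cP_t\f - \pi\f\| \leq C e^{-\lambda t}\|\f\|$, which is exactly Theorem~\ref{thm:x-ergodicity}.

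The main obstacle, and the genuinely novel piece of the argument beyond \cite{2002-jcm-stuart,2002-jcm-stuart-higham}, is precisely the non-compactness of the natural small sets caused by the singularity of $\Phi$ at the origin: uniform controllability on all of $\{V\leq R\}$ simply fails, and without the good/bad decomposition one cannot even write down a candidate minorizing measure. The quantitative escape estimate in Corollary~\ref{cor:nonlinear-near-zero}, which relies on the near-origin repulsivity \eqref{eq:spring-small-r} built into Assumption~\ref{a:potential-singularity}, is what allows the bad region to be absorbed at a bounded probabilistic cost and a bounded time delay.
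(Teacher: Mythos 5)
Your proposal follows the same route as the paper: the abstract Harris theorem from \cite{HairerMattingly08YAH}, the Lyapunov bound of Lemma~\ref{lem:lyapunov}, and a minorization built by decomposing the sublevel set into a compact ``good'' annulus (control-theoretic irreducibility plus a H\"ormander density, combined as in Proposition~\ref{thm:two-conditions}) and a ``bad'' near-origin region handled by the escape estimate of Corollary~\ref{cor:nonlinear-near-zero}, exactly as in Lemmas~\ref{lem:top-irred}--\ref{lem:top-irred-C}. Two small inaccuracies worth flagging: (i) H\"ormander's bracket condition does \emph{not} hold at every $\xv\in\xbb$ --- the periodicity of the forcing creates dead spots where $\sin(\length\kv\cdot\rv)=0$ for all $\kv$ --- and the paper verifies it only on $\cntr$, which suffices since $L\gg 4R_0$ pushes those lattice points outside $\cntr$; and (ii) hypoellipticity delivers a \emph{smooth}, not automatically \emph{positive}, density, so the pointwise positivity at some $(\xv_*,\yv_*)$ must be supplied by the topological irreducibility, which is precisely why Proposition~\ref{thm:two-conditions} needs both conditions rather than H\"ormander alone.
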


Let us introduce a family of weighted $L^\infty$-norms that
depend on a scale parameter $\beta > 0$. For a measurable $\f\colon\xbb
\rightarrow \rbb$ define
\begin{align*}
  \norm{\f}_{\beta} := \sup_{x \in \xbb} \frac{|\f(x)|}{1 + \beta V(x)}\,.
\end{align*}
Observe that $\|\cdot\|_1 = \|\cdot\|$ and any two norms in this family are equivalent.
Define the corresponding dual metric on probability measures:
\begin{align*}
  \dd_{\beta}(\mu_1,\mu_2) &= \sup_{\f : \norm{\f}_\beta \leq 1} \int
  \f(x) \mu_1(dx) - \int
  \f(x) \mu_2(dx)
\end{align*}
for two probability measure $\mu_1, \mu_2$ probability measures on $\xbb$. Note that $\dd_\beta$
is the usual total variation norm for $\beta = 0$. 
Theorem \ref{thm:x-ergodicity} follows from classical results in \cite{MeynTweedie93} and \cite{MeynTweedie95} adapted to our
setting:

\begin{theorem}\label{thm:HM-harris} Suppose that  the Lyapunov function $V \colon \xbb \to [0,\infty)$
has compact level sets with $\lim_{x \to \partial \mathbb{X}} V(x) =
\infty$ and that  for some $t > 0$, $c_1 >0$
and $c_0 \in (0,1)$, it satisfies
\begin{equation} \label{eq:lyapunov-condition}
(\cP_t V)(x) \leq c_0 V(x) + c_1
\end{equation}
for all $x \in \xbb$. (Here, the boundary set $\partial \mathbb{X}$
includes the point at infinity in unbounded directions.) Furthermore suppose there exists a
probability measure $\nu$ and constant $\alpha \in (0,1)$ such that
\begin{equation} \label{eq:minorization-condition}
  \inf_{x \in \cpt} \cP_t(x, \ccdot) \geq \alpha \nu(\ccdot)
\end{equation}
with $\cpt := \{x \in \xbb \colon V(x) \leq  K\}$ for some $K \geq
2c_1/(1 - c_0)$.

Then there exists an $\alpha_0 \in (0,1)$ and $\beta > 0$ so that
  \begin{align*}
    \dd_\beta( \cP_t^* \mu_1 , \cP_t^* \mu_2) \leq \alpha_0\,
    \dd_\beta(\mu_1,\mu_2)
  \end{align*}
  for any two probability  measures $\mu_1$ and $\mu_2$ on $\xbb$.
\end{theorem}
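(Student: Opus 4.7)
The plan is to invoke the coupling / weighted-Wasserstein argument of \cite{HairerMattingly08YAH}: reformulate $\dd_\beta$ as a transportation cost, and prove contraction by constructing, for each pair $(\xv,\yv)$, a coupling of $\cP_t(\xv,\ccdot)$ and $\cP_t(\yv,\ccdot)$ tailored to whichever of the two hypotheses, Lyapunov or minorization, is active at $(\xv,\yv)$.

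First, I would identify $\dd_\beta$ with a Kantorovich-type distance. Since $\norm{\f}_\beta \leq 1$ is equivalent to $|\f(\xv)| \leq W(\xv) := 1 + \beta V(\xv)$, a standard duality (the weighted analogue of Kantorovich--Rubinstein) gives
$$\dd_\beta(\mu_1,\mu_2) = \inf_{\Gamma} \int \bigl(W(\xv') + W(\yv')\bigr)\indicator_{\xv'\neq \yv'}\,\Gamma(d\xv',d\yv'),$$
the infimum running over all couplings $\Gamma$ of $\mu_1,\mu_2$. Defining $d_\beta(\xv,\yv) := (W(\xv)+W(\yv))\indicator_{\xv\neq \yv}$, it therefore suffices to produce, for each pair $(\xv,\yv)$, a coupling $\Gamma_{\xv,\yv}$ of $\cP_t(\xv,\ccdot)$ and $\cP_t(\yv,\ccdot)$ satisfying
$$\int d_\beta\,d\Gamma_{\xv,\yv} \leq \alpha_0\,d_\beta(\xv,\yv)$$
uniformly in $(\xv,\yv)$, for some $\alpha_0 \in (0,1)$ and $\beta > 0$ still to be chosen.

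I would then split into two regimes. \emph{Regime A (Lyapunov):} suppose $V(\xv) + V(\yv) \geq R$, so at least one of the points lies outside $\cpt$. Take $\Gamma_{\xv,\yv}$ to be the independent coupling; since $\indicator_{\xv' \neq \yv'} \leq 1$, the drift condition \eqref{eq:lyapunov-condition} yields
$$\int d_\beta \,d\Gamma_{\xv,\yv} \leq 2 + \beta c_0\bigl(V(\xv)+V(\yv)\bigr) + 2\beta c_1.$$
Because $R > 2c_1/(1-c_0)$, there is strict slack once $V(\xv)+V(\yv) \geq R$, and choosing $\beta$ sufficiently \emph{large} makes this bound at most $\alpha_0 d_\beta(\xv,\yv)$ for any prescribed $\alpha_0 \in (c_0+2c_1/R,\,1)$. \emph{Regime B (minorization):} suppose $V(\xv) + V(\yv) < R$, so $\xv,\yv \in \cpt$ and \eqref{eq:minorization-condition} gives $\cP_t(\xv,\ccdot) = \alpha\nu + (1-\alpha)Q_\xv$ and analogously for $\yv$, with $Q_\xv,Q_\yv$ probability kernels. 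I would build $\Gamma_{\xv,\yv}$ by flipping a biased coin of weight $\alpha$: on heads, draw a single point from $\nu$ and place both coordinates there (contributing zero to $\int d_\beta$); on tails, sample independently from $Q_\xv \otimes Q_\yv$. The Lyapunov estimate then bounds the tails contribution by
$$2(1-\alpha) + \beta c_0\bigl(V(\xv)+V(\yv)\bigr) + 2\beta c_1,$$
and since $d_\beta(\xv,\yv) \geq 2$, choosing $\beta$ sufficiently \emph{small} pushes the contraction ratio arbitrarily close to $1-\alpha < 1$.

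The main obstacle is reconciling the two regimes, which pull $\beta$ in opposite directions. The critical quantitative observation is that $R > 2c_1/(1-c_0)$ forces $c_0 + 2c_1/R < 1$; hence one may fix any $\alpha_0 \in \bigl(\max(1-\alpha,\,c_0+2c_1/R),\,1\bigr)$ and then select $\beta$ in the (non-empty, by an explicit computation) intersection of the two constraints. With $(\alpha_0,\beta)$ so chosen, the per-point coupling bound integrates against an optimal coupling of $\mu_1$ and $\mu_2$ to give $\dd_\beta(\cP_t^*\mu_1,\cP_t^*\mu_2) \leq \alpha_0\,\dd_\beta(\mu_1,\mu_2)$, completing the proof.
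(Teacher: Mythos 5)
The paper itself does not prove Theorem~\ref{thm:HM-harris}: it is stated as an adaptation of the main result of \cite{HairerMattingly08YAH} and quoted without proof. Your coupling argument is nonetheless a correct reconstruction, and it is the transport-plan dual of the argument actually given in that reference, where Hairer and Mattingly work on the function side: taking $\f$ with $\norm{\f}_\beta \le 1$, they bound $\cP_t\f(\xv)-\cP_t\f(\yv)$ by splitting into the small-set case (peel off the common $\alpha\nu$ piece via minorization) and the large-$V$ case (apply the Lyapunov drift), then optimize $\beta$. Your two coupling regimes are precisely these two cases. The duality you invoke,
$$\dd_\beta(\mu_1,\mu_2)=\inf_{\Gamma}\int\bigl(W(\xv')+W(\yv')\bigr)\indicator_{\xv'\neq\yv'}\,\Gamma(d\xv',d\yv'),$$
does hold: after subtracting the constant $\tfrac12\bigl(\inf(\f+W)+\sup(\f-W)\bigr)$, any $d_\beta$-$1$-Lipschitz $\f$ satisfies $|\f|\le W$, so the Kantorovich and weighted-TV suprema coincide. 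Your arithmetic is also right: Regime~A requires $2(1-\alpha_0)+2\beta c_1\le \beta R(\alpha_0-c_0)$, solvable in $\beta$ once $\alpha_0>c_0+2c_1/R$ (which is $<1$ because $R>2c_1/(1-c_0)$); Regime~B is worst at $V(\xv)+V(\yv)=0$, giving $\beta\le(\alpha_0+\alpha-1)/c_1$ and hence $\alpha_0>1-\alpha$; sending $\alpha_0\uparrow 1$ drives the Regime-A lower bound on $\beta$ to $0$ while keeping the Regime-B upper bound positive, so the intersection is nonempty. Two small points you gloss over are harmless but worth flagging: on the diagonal $\xv=\yv$ one must use the synchronous coupling (otherwise $d_\beta(\xv,\yv)=0$ cannot be matched), and you need measurability of $(\xv,\yv)\mapsto\Gamma_{\xv,\yv}$ to integrate the pointwise contraction against a coupling of $\mu_1,\mu_2$ -- this is immediate here since each regime's coupling is given by an explicit formula in the kernels. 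The proposal is correct and supplies the proof the paper omits; the choice of the coupling formulation over the functional one is a presentational preference, with the functional proof slightly shorter (no need for Kantorovich duality or measurable selection) and the coupling proof arguably more transparent about where the minorization is used.
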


We begin by fixing the set $\cntr$ which should be thought of as the ``center'' of the
state space.  At the end of the proof of Lemma \ref{lem:lyapunov} we
select a value $\rhogood \in (\rho_0,\rhomax)$ which is used to define
$\cntr$: 
\begin{align} \label{eq:defn-center}
  \cntr := \{x \in \xbb : V(x) \leq \psi(\phi(\rho_+))\}.
\end{align}
Recall that the Lyapunov function $V$ is defined by \eqref{eq:defn-lyapunov} with $\phi$ and $\moll$ defined by \eqref{eq:defn-phi-radial} and \eqref{eq:defn-moll}, respectively.
As is established by the following lemma, $V$ satisfies the
inequality \eqref{eq:lyapunov-condition}. We defer the somewhat
standard proof of this lemma to the appendix, Section \ref{sec:Lyap}.
\begin{lemma}[Lyapunov function] \label{lem:lyapunov}
  Fix the values of the constants $\eta$, $c$ and $\rhomax$ so that they satisfy the constraints imposed by the inequalities \eqref{eq:defn-eta}, and let $V(x)$ be defined as in \eqref{eq:defn-lyapunov}.
Then for any $t \geq 1$ there exist constants $c_0 := c_0(t) \in (0,1)$ and $c_1  := c_1(t) \geq 0$ such that \eqref{eq:lyapunov-condition} holds. Moreover we have $\psi(\phi(\rhogood)) \geq \frac{2c_1}{1- c_0}$ as required for the definition \eqref{eq:defn-center} of $\cntr$ by Theorem \ref{thm:HM-harris}.
\end{lemma}
The remainder of Section \ref{sec:ergodicity} is concerned with constructing a minorizing measure, as required by condition \eqref{eq:minorization-condition}.  The main result is Proposition \ref{thm:two-conditions}. Its proof follows from the topological irreducibility of the transition semigroup established in Proposition \ref{lem:top-irred-C} and the ``local smoothing'' property proved in Proposition \ref{lem:meas-irr}.  The local smoothing property follows from hypoellipticity of the generator of the Markov process $\Xv$ and a version of H\"ormander's sum of squares theorem (cf. \cite{hormander85III,stroock08PDE}).

\subsection{Conditions for measure-theoretic irreducibility}\label{sec:measureTheoreticIrred}

In this section we  use a very weak form of topological irreducibility to prove the measure-theoretic minorization and irreducibility required in \eqref{eq:minorization-condition}.

\begin{prop} \label{thm:two-conditions}
	Suppose there exists an $x_* \in \cntr$ such that the following two conditions hold.  Then there exists a constant $\alpha \in (0,1)$, a time $t \geq 1$ and a probability measure $\nu$
such that \eqref{eq:minorization-condition} holds.
  \begin{enumerate}
  \item \label{cond:stoch-control} \emph{Uniformly Accessible Neighborhood Condition:} For any $\delta>0$ there exists a constant $r > 0$ and a positive function $\alpha_0 \colon (0,\infty) \rightarrow (0,\infty)$ such that
    \begin{align}\label{eq:topIrrMinorize}
      \inf_{x \in \cntr}\cP_{r'}(x,B_\delta(x_*)) \geq \alpha_0(r')
    \end{align}
    for all $r' > r$.
  \item \emph{Continuous Density Condition:} There exists an $s >0$ and an open set $\ocal \subset \cntr$ with $x_* \in \ocal$, such that for any $x \in \ocal$ and measurable $A \subset \ocal$ one has
    \begin{align*}
      \cP_{s}(x,A)=\int_A p_{s}(x,y)dy
    \end{align*}
    with $p_{s}(x,y)$ jointly continuous in $(x,y)$ for $x,y \in \ocal$ and
    $p_s(x_*,y_*)>0$ for some $y_* \in \ocal$.
  \end{enumerate}
\end{prop}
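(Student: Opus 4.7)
The plan is to paste the two hypotheses together via the Chapman--Kolmogorov identity at time $\tau = t^* + s$: the uniformly accessible neighborhood condition concentrates the mass from any starting point $\xv \in \cntr$ into a small ball around $\xv_*$ by time $t^*$, and the continuous density condition then smooths that concentrated mass into a measure absolutely continuous with respect to Lebesgue.

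First I would use joint continuity of $p_s$ on $\ocal \times \ocal$ together with the strict positivity $p_s(\xv_*,\yv_*) > 0$ to produce open sets $B_\delta(\xv_*) \subset \ocal$ and $W \subset \ocal$ (with $\yv_* \in W$) and a constant $c > 0$ such that $p_s(\xv,\yv) \geq c$ for all $(\xv,\yv) \in B_\delta(\xv_*) \times W$. The Lebesgue measure $|W|$ is strictly positive since $W$ is open and nonempty. With this particular $\delta$ now fixed, I would invoke the uniformly accessible neighborhood condition to obtain $t_1 > 0$ such that for every $t^* > t_1$,
\begin{equation*}
\inf_{\xv \in \cntr} \cP_{t^*}(\xv, B_\delta(\xv_*)) \geq \alpha_1(t^*,\delta) > 0.
\end{equation*}

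To conclude, I would set $\tau = t^* + s$ for some fixed $t^* > t_1$ and estimate, for $\xv \in \cntr$ and any measurable $A \subset \xbb$,
\begin{align*}
\cP_\tau(\xv, A) &= \int_{\xbb} \cP_{t^*}(\xv,d\yv)\,\cP_s(\yv,A) \\
&\geq \int_{B_\delta(\xv_*)} \cP_{t^*}(\xv,d\yv) \int_{A \cap W} p_s(\yv,\yv')\,d\yv' \\
&\geq c\,|A \cap W|\,\cP_{t^*}(\xv, B_\delta(\xv_*)) \\
&\geq c\,\alpha_1(t^*,\delta)\,|W|\,\nu(A),
\end{align*}
where $\nu(A) := |A \cap W|/|W|$ is normalized Lebesgue measure on $W$. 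Taking $\alpha := \min\{c\,\alpha_1(t^*,\delta)\,|W|,\,1/2\}$ produces a constant in $(0,1)$ and establishes the minorization \eqref{eq:minorization-condition} at time $\tau$.

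The argument is essentially a standard coupling of weak topological irreducibility with local smoothness of the transition kernel, so there is no deep obstacle. The only point requiring care is the order in which neighborhoods are chosen: the ball $B_\delta(\xv_*)$ on which the uniform density bound holds must be fixed \emph{before} the accessibility condition is invoked, since both $t_1$ and $\alpha_1$ depend on $\delta$. The genuine mathematical work lies not here but in verifying the two hypotheses themselves, which is the content of the subsequent sections on control theory and H\"ormander's theorem.
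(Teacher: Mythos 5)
Your proof is correct and follows essentially the same route as the paper's: fix $\delta$ first so the density is bounded below on $B_\delta(\xv_*)\times W$, take $\nu$ to be normalized Lebesgue measure on $W$, then paste the two hypotheses via Chapman--Kolmogorov at time $t^*+s$. The paper takes $W=B_\delta(\yv_*)$ and $c=\tfrac12 p_s(\xv_*,\yv_*)$ explicitly, but the argument is the same; your extra $\min\{\cdot,1/2\}$ to force $\alpha<1$ is a harmless cosmetic addition.
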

\begin{proof}
  By the continuity assumption on $p_s$ there exists $\delta > 0$ so that
  $B_\delta(x_*),B_\delta(y_*) \subset \ocal$ and
  \begin{align*}
    \inf_{x \in B_\delta(x_*)}\inf_{y \in
      B_\delta(y_*)} p_{s}(x,y) \geq \frac12 p_{s}(x_*,y_*) >0\;.
  \end{align*}
  We define the minorizing probability measure $\nu$
  by $\nu(A) = {\lambda(A\cap B_\delta(y_*))}/{
  \lambda(B_\delta(y_*))}$
  where $\lambda$ is Lebesgue measure and $A$ is any measurable set.
With this $\delta$ we also fix $r = r(\delta)$ according to the Uniformly Accessible Neighborhood Condition \eqref{cond:stoch-control}.

Now, pick $t \geq 1 + r + s$ and define $\alpha(t)=\frac12 (1 \wedge p_{s}(x_*,y_*) \alpha_0(t-s)\lambda(B_\delta(y_*)))$, where $\alpha_0$ is the function given in \eqref{eq:topIrrMinorize}. Then for any measurable set $A$ and $x_0 \in \mathcal{C}$ we have
  \begin{align*}
    \cP_t(x_0,A) &=  \int_A \int_{\rbb^{2+N}} \cP_{t-s} (x_0, dx) \cP_{s} (x,dy)\\
    &\geq \int_{A \cap B_\delta(y_*)} \left(\int_{B_\delta(x_*)} \cP_{t-s}(x_0,dx)\right) p_{s}(x, y) dy \\
	&\geq \int_{A\cap B_{\delta}(y_*)} \alpha_0(t-s)\frac{1}{2} p_s(x_*,y_*) dy \geq \alpha(t) \nu(A)\;,
  \end{align*}
which proves the claim.

\end{proof}

\subsection{Topological irreducibility}
\label{sec:ControlGoodBad}

This section is devoted to proving the Uniformly Accessible Neighborhood Condition \eqref{cond:stoch-control} stated in Proposition \ref{thm:two-conditions}. This argument consists of first proving that under the spring potential conditions listed in Assumption \ref{a:potential-singularity}, the system has non-trivial long-term behavior.  Unlike the Hookean spring case where the two particles come together as $t \to \infty$ almost surely (Proposition \ref{thm:hookean}), in the non-linear (with repulsion) spring case we can show that two particles arbitrarily close together have a positive probability of separating in an explicitly defined finite time (Lemma \ref{lem:nonlinear-near-zero}). Given this separation property, we employ a control argument to show the noise has a positive probability of directing the system to a neighborhood of a specified reference point $x_* \in \cntr$ (Lemmas \ref{lem:top-irred} and \ref{lem:top-irred-C}). 

\subsubsection{A particle separation lemma}

\begin{lemma} \label{lem:nonlinear-near-zero}
Let $M > m > 0$ be given, suppose $(\rv(0),\zv(0)) =
(r_0,z_0) \in \dcal \times \rbb^N$, and define
$$
\tau_\epsilon(r_0, z_0) := \inf\{t \geq 0: |\rv(t)| \geq \ep
\mbox{ and } |\zv(t)| < M\}.
$$
Then there exists an $\ep \in (0,\ep_0]$ where $\ep_0$ is defined in Assumption \ref{a:potential-singularity} and an $\alpha \in (0,1)$ such that $\tau_\ep$
satisfies
\begin{equation} \label{eq:escape-lower-bound}
\inf_{\{z_0 : |z_0| < m\}} \inf_{\{r_0 : 0 < |r_0| \leq
\ep\}} \p{\tau_\ep(r_0, z_0) \leq 1} \geq \alpha.
\end{equation}
\end{lemma}

\begin{proof}
The essence of the argument is that if the noise stays relatively small for
sufficiently long, then the repulsive force will dominate the
$\rv$-dynamics and force the particles away from each other. Without loss of generality, for the remainder of this proof we assume that the initial condition $(r_0,z_0)$ satisfies $r_0 \leq \ep_0$ and $|z_0| \leq m$. 

We denote the event that the magnitude of the noise stays moderate by
$
\Omega_{z} := \big\{\sup_{t \in [0, 1]} |\zv(t)| < M\big\}$ and claim there exists an $\ep \in (0,\ep_0]$ and $\alpha > 0$ such that
$\p{\Omega_{z}} \geq \alpha$ and $\p{\tau_\ep \leq 1 \, | \,
\Omega_{z}} =1$ and therefore
\begin{equation*} 
\p{\tau_\ep \leq 1} \geq \p{\tau_\ep \leq 1 \, | \,
\Omega_{z}}\cdot \p{\Omega_{z}} \geq 1 \cdot \alpha.
\end{equation*}
We first prove that there exists an $\alpha > 0$ such that 
\begin{equation} \label{eq:separation-claim-one}
	\inf_{z_0 : |z_0| \leq m} \p{\Omega_{z}} \geq \alpha.
\end{equation}
Indeed, the noise vector $Z(t) = (Z_1(t), Z_2(t), \ldots, Z_N(t))$ can be written
\begin{equation} \label{eq:z-duhamel}
\zv(t) = e^{-\Lambdav t} z_0 + \int_0^t e^{-\Lambdav(t
- s)} B d\Wv(s)
\end{equation}
where $\Lambdav$ is a diagonal matrix whose entries $\{\lambda_\kv\}_{\kv \in \modeset}$ are given by $\lambda_\kv := \lambda^2 \nu |\kv|^2$ and $B$ is a diagonal matrix whose entries $\{b_k\}_{k\in \modeset}$ are given by $\sqrt{2 \beta \nu} \length \sigma_\kv$.

It follows from \eqref{eq:z-duhamel} that
$$
|\zv(t)| \leq m + \sum_{\kv \in \modeset} \left|e^{-\lambda_\kv t}
\int_{0}^t e^{\lambda_{\kv} s} b_k d W_{\kv}(s)\right|.
$$
Since $M_\kv(t) := \int_{0}^t e^{\lambda_{\kv} s} b_k d W_{\kv}(s)$ is a continuous martingale with quadratic variation $\langle M_\kv, M_\kv \rangle_t = b_k^2 (e^{2 \lambda_\kv t} - 1)/ 2 \lambda_{\kv}$, then for any $t > 0$, $M_\kv(t)$ has the same distribution as $\tilde W( \langle M_\kv, M_\kv \rangle_t )$ where $\tilde W$ is a standard Brownian motion. It follows that 
\begin{align*}
\alpha_{\kv} &:= \mathbb{P}\Big\{\sup_{t \in [0, 1]}
\big|e^{-\lambda_\kv t} \int_{0}^t e^{\lambda_{\kv} s} d
W_{\kv}(s)\big| \leq \frac{M - m}{N}\Big\} \geq  \mathbb{P}\Big\{\sup_{t \in [0,t_k]} |\tilde W(t)| \leq \frac{M - m}{N}\Big\}
\end{align*}
where $t_k = b_k^2 (e^{2 \lambda_\kv } - 1)/
2 \lambda_{\kv}.$
Since a Brownian motion will stay within a prescribed tube over an
arbitrarily long finite interval with positive probability, we have
that $\alpha_\kv > 0$.
Because there are only finitely many modes and they are mutually
independent, we have $\p{\Omega_{z}} \geq \prod_{\kv \in
\modeset} \alpha_{\kv} > 0.$ To conclude the proof of the claim \eqref{eq:separation-claim-one}, it remains only to note that this lower bound for $\p{\Omega_z}$ does not depend on the initial condition $z_0$ as long as $|z_0|\leq m$.

We now show that there exists an $\ep > 0$ so that
\begin{equation} \label{eq:escape-conditioned-omega-0}
\p{\tau_\epsilon \in [0,1] | \, \Omega_{z}} = 1.
\end{equation}
Let $\ep_0$ and $\gamma_0$ be the positive constants from
\eqref{eq:spring-small-r} of Assumption
\ref{a:potential-singularity}. We fix
\begin{equation} \label{eq:defn-escape-ep}
\ep := \ep_0 \wedge \sqrt{\frac{(1 - e^{-(N M)^2 / \gamma_0})}{(N M)^2}}
\end{equation}
and define $\sigma_\epsilon := \inf\{t \geq 0: |\rv(t)| \geq \ep\}$. Conditioned on the event $\Omega_z$,
we have $\tau_\epsilon=\sigma_\epsilon$, and so to prove \eqref{eq:escape-conditioned-omega-0} it suffices to show $\sigma_\epsilon \leq 1$ on $\Omega_{z}$.

Recall the ODE \eqref{eq:defn-r} defining $R$ and the notation $S$ for the Stokes matrix, see \eqref{eq:defn-stokes-matrix}. For any $t \in [0,\sigma_\ep]$ and for any $\vartheta >
0$ we have the differential inequality
\begin{align*}
\ddt \frac{1}{2} |\rv|^2 &= - \nabla \Phi (\rv) \cdot \rv + (S(\rv)
\zv)
\cdot \rv 
\geq \gamma_0 - \vartheta |S(\rv) \zv|^2 - \frac{1}{4\vartheta} |\rv|^2
\end{align*}
where we have applied the inequality \eqref{eq:spring-small-r} from
Assumption \ref{a:potential-singularity} to the first term and the
polarization inequality $x \cdot y \geq -(\vartheta |x|^2 +
\frac{1}{4\vartheta} |y|^2)$ to the second term. Furthermore $
|S(\rv) \zv| \leq \|S(\rv)\|_F |\zv|$ where $\|\cdot\|_F$ is the
matrix Frobenius norm.  The contribution of each column (respectively associated to an eigenmode $k$) of the
Stokes matrix to its Frobenius norm is exactly $\sin^2(\lambda \kv
\cdot \rv)$. It follows that $\|S(\rv)\|_F \leq N
$. Hence for all $t \in [0, \sigma_\ep]$,
\begin{align*}
\ddt \frac{1}{2} |\rv(t)|^2 &\geq -\frac{1}{4\vartheta} |\rv(t)|^2 +
(\gamma_0 - \vartheta N^2 |\zv(t)|^2).
\end{align*}
Restricting to the event $\Omega_{z}$ and fixing $\vartheta =
\gamma_0 / 2 (N M)^2$, we have
\begin{equation*}
  \ddt |\rv(t)|^2 \geq - \frac{(N M)^2}{\gamma_0} |\rv(t)|^2 + \gamma_0.
\end{equation*}
For any $t \in [0,1]$,  integrating the preceding estimate on
$\Omega_{z}$ yields
\begin{align*}
  |\rv(t \wedge \sigma_\epsilon)|^2 &\geq e^{-(t \wedge \sigma_\epsilon) (N M)^2 / \gamma_0 } |r_0|^2 + \gamma_0 \int_{0}^{t \wedge \sigma_\epsilon} e^{-[(t \wedge \sigma_\epsilon)-s] (N M)^2 /\gamma_0} ds \\
&\geq \frac{\gamma_0^2}{(N M)^2} \left(1 - e^{-(t \wedge \sigma_\epsilon)
(N M)^2 / \gamma_0} \right)\,.
\end{align*}
We want to show that on $\Omega_{z}$, $\sigma_\epsilon \leq 1$
with probability one. Suppose that $\sigma_\epsilon >  1$. Then the last
estimate implies that
$$
|\rv(1 \wedge \sigma_\epsilon)|^2 = |\rv(1)|^2 \geq (N
M)^{-2}(1 - e^{-(N M)^2 /\gamma_0}) \geq \ep^2
$$
and hence $\sigma_\epsilon\leq 1$.
We conclude the claim \eqref{eq:escape-conditioned-omega-0}, which completes the proof.
\end{proof}

\subsubsection{Topological irreducibility via control}

By Assumption \ref{a:potential-singularity}, the spring potential $\Phi$ has a
(possibly non-unique) global minimum $r_{\min}$, which satisfies
$|r_{\min}| \leq \rho_0$ where $\rho_0$ was the constant from Assumption~\ref{a:potential-singularity}. 
We choose a global minimum closest to the origin and denote it by $r_{\ast}$.
Since the global
minimum of the noise norm $|\cdot|$ is achieved at the origin, $z_*
= 0$, we set the global reference point
\begin{equation} \label{eqn:pointxstar}
x_\ast := (r_\ast,0)
\end{equation}
which is a minimum of the Lyapunov function $V$.

We wish to use the $\zv$ process to drive the $\rv$ process to the reference point $r_\ast$.
However, due to the possible singularity at the origin (see Assumption
\ref{a:potential-singularity}) the differential equation
\eqref{eq:defn-r} for $\rv$ may have unbounded coefficients which presents a genuine
difficulty in applying control theoretic arguments. We
therefore will designate a region of bad control, $\mathcal{B}$, within the center
$\cntr$ (see \eqref{eq:defn-center}), as well as a compact region of good control, $\good$.

In Lemma \ref{lem:nonlinear-near-zero} we demonstrated that the $R$ process has a positive probability of escaping from a neighborhood of 0 in unit time. 
Let $\ep_1$ be the constant derived from applying Lemma \ref{lem:nonlinear-near-zero}
with $m = \psi(\phi(\rhogood))$ and $M = m/\sqrt{\eta}$,
where $\eta$ is given in \eqref{eq:defn-eta}. Since $\eta \leq 1/2$ we have 
 $M > m > 0$ as required by the hypothesis of Lemma \ref{lem:nonlinear-near-zero}. We define the set of ``bad'' points in $\cntr$ by
\begin{align}
  \label{eq:defn-bad}
  \bad=\big\{ (r,z) \in \cntr : |r| < \ep_1\big\}\;.
\end{align}
Next, we define the set of ``good'' points $\good$ to be
\begin{equation}
\label{eq:defn-good} \good = \rgood \times \zgood := \Big\{(r,z)
\in \xbb : |r| \in \big[\ep_1, \rhogood \big] , |z|^2 \leq \psi(\phi(\rhogood))/\eta
\Big\}\;.
\end{equation}
Note that $\cntr \subset \good \cup \bad$.

We now  use a controllability argument to establish the weak form of
uniform topological irreducibility on $\good$ given (for the set
$\cntr$) in Eq. \eqref{eq:topIrrMinorize}. 
\begin{lemma}[Topological irreducibility on the ``good'' set $\good$]
\label{lem:top-irred} Let  $x_\ast \in
\cntr$ be as given in \eqref{eqn:pointxstar}. Then for  any
  $\delta > 0$ there exists  $t_1 >0$ so that for any $t_2 > t_1$  there exists $\alpha_1 > 0$ 
  such that
  \begin{align}\label{eq:goodMinor}
  \inf_{t \in [t_1,t_2]}  \inf_{x \in \good} \tkernel_{t}(x, B_{\delta}(x_*)) \geq \alpha_1 .
  \end{align}
\end{lemma}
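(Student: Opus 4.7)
My strategy is to reduce the lemma to a deterministic controllability statement via the Stroock--Varadhan support theorem, and then to promote pointwise reachability to a uniform lower bound using Feller continuity and compactness of $\good \times I$.

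First, I would fix a reference point $\xv_\ast = (\rv_\ast, \zerov)$ with $|\rv_\ast|$ strictly inside the annulus $(\ep_1, \sqrt{2}R_0)$ so that $\xv_\ast \in \cntr$, and with $\rv_\ast$ chosen so that at least two of the coefficients $\sin(\length\, \kv \cdot \rv_\ast)$ for pairwise linearly independent $\kv \in \modeset$ are nonzero. By the hypothesis that $\modeset$ contains at least three pairwise linearly independent vectors, each set $\{\sin(\length\, \kv \cdot \rv) = 0\}$ is a discrete union of parallel lines, and the intersection of any two such sets for linearly independent $\kv$ is a discrete lattice; thus the exceptional set of $\rv$ in $\rgood$ where the noise directions $\{\sin(\length\, \kv \cdot \rv)\, \kvperp/|\kv|\}_{\kv \in \modeset}$ fail to span $\rbb^2$ reduces to a finite collection of isolated points in the compact annulus $\rgood$.

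Second, for each $\xv_0 = (\rv_0, \zv_0) \in \good$ and each fixed terminal time $t > t_1$ (with $t_1$ chosen below), I would exhibit a piecewise smooth control $u \in L^2([0,t]; \rbb^{|\modeset|})$ that steers the deterministic control analogue of \eqref{eq:defn-r}--\eqref{eq:defn-z} from $\xv_0$ to $\xv_\ast$. The construction has three phases: (i) on a short initial interval, use $u$ to adjust $\hat\zv$ from $\zv_0$ to a value adapted to the path chosen for $\hat\rv$; (ii) along a smooth path of $\hat\rv$ in $\rgood$ from $\rv_0$ to $\rv_\ast$ avoiding the finite exceptional set from step one, solve the surjective linear relation $\Uv(\hat\rv, \hat\zv) = \dot{\hat\rv} + \nabla\Phi(\hat\rv)$ for $\hat\zv$ pointwise along the path, and choose $u$ so that the $\hat z$-equation reproduces this prescribed $\hat\zv$; (iii) pin $\hat\rv$ near $\rv_\ast$ with a small compensating $\hat\zv$ while letting the stable OU dynamics relax $\hat\zv$ toward $\zerov$. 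Choose $t_1$ large enough to accommodate all three phases; for any $t \in I$ extend the control by a small local stabilizing feedback that keeps $\hat\Xv$ near $\xv_\ast$.

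Third, the Stroock--Varadhan support theorem gives $\xv_\ast \in \operatorname{supp} \tkernel_t(\xv_0, \cdot)$ and hence $\tkernel_t(\xv_0, B_\delta(\xv_\ast)) > 0$ pointwise for every $\delta > 0$, $\xv_0 \in \good$, and $t \in I$. Since the coefficients of \eqref{eq:defn-r}--\eqref{eq:defn-z} are smooth on $\xbb$ and solutions starting in $\good \subset \xbb$ remain in $\xbb$ by Lemma \ref{lem:nonlinear-near-zero}, the Markov semigroup is Feller; by Portmanteau the map $(\xv_0, t) \mapsto \tkernel_t(\xv_0, B_\delta(\xv_\ast))$ is lower semi-continuous on the compact set $\good \times I$, and therefore attains a strictly positive minimum $\alpha_1$. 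The principal technical obstacle lies in phase (ii): producing a smooth $\hat\rv$-path in $\rgood$ that connects $\rv_0$ to $\rv_\ast$, avoids the isolated exceptional points, and yields a bounded $\hat\zv$. The three pairwise linearly independent modes hypothesis is essential here, since it reduces the obstruction to spanning to a zero-dimensional set that is easily circumnavigated inside the connected annulus $\rgood$.
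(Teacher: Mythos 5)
Your route is genuinely different from the paper's. Where you reduce to deterministic controllability via the Stroock--Varadhan support theorem and then recover uniformity from Feller continuity plus lower semicontinuity on the compact set $\good \times I$, the paper constructs the control explicitly using the Moore--Penrose pseudoinverse $S^\dagger$ along a piecewise-linear path, bounds the control uniformly over $\rgood$, and then proves a quantitative Gronwall-type ``fuzzy control'' estimate for the error $\hv=\Gamma-\tilde\rv$ inside a Wiener tube of fixed width. The paper's version yields a concrete $\alpha_1>0$ directly, without invoking the support theorem or Feller theory; yours is more modular and avoids the bookkeeping, but relies on the (standard but unverified here) joint weak continuity of $(\xv_0,t)\mapsto\cP_t(\xv_0,\cdot)$ on the open manifold $\xbb$. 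Both are workable in principle.

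There is, however, a genuine gap in your choice of reference point. You select $\rv_\ast$ only by the requirement that the noise directions span at $\rv_\ast$; you do \emph{not} require that $\rv_\ast$ be a critical point of $\Phi$. This matters critically in your phase (iii) and in the ``stabilizing feedback'' you invoke to fill out an arbitrary $t\in I$. To land $\hat\Xv(t)$ in $B_\delta(\xv_\ast)=B_\delta((\rv_\ast,\zerov))$ you need simultaneously $|\hat\rv-\rv_\ast|<\delta$ and $\|\hat\zv\|<\delta$. But once $\hat\zv$ is small the drift of $\hat\rv$ is $\approx-\nabla\Phi(\rv_\ast)$; if $\nabla\Phi(\rv_\ast)\neq\zerov$ and $\delta$ is small relative to $|\nabla\Phi(\rv_\ast)|$, the only way to pin $\hat\rv$ at $\rv_\ast$ is with $\hat\zv$ satisfying $S(\rv_\ast)\hat\zv\approx\nabla\Phi(\rv_\ast)$, which is \emph{not} small, contradicting $\|\hat\zv\|<\delta$. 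So for the longer $t\in I$ the controlled trajectory cannot remain in $B_\delta(\xv_\ast)$, and the support-theorem step fails for those $t$. The paper resolves this precisely by taking $\rv_\ast$ to be a global minimizer of $\Phi$; Assumption~\ref{a:potential-singularity} (repulsion for $|\rv|\leq\ep_0$ and confinement for $|\rv|\geq R_0$) places such a minimizer in the interior of the annulus $\rgood$, where $\nabla\Phi(\rv_\ast)=\zerov$, making $(\rv_\ast,\zerov)$ a fixed point that can be held indefinitely with vanishing control. You should build that into your construction.

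A secondary remark: the extra care you take to ``circumnavigate'' a finite exceptional set where the $\sin$ factors vanish is unnecessary. Since $\modeset$ is finite and $L\gg R_0$, for $\rv\in\rgood$ one has $|\kv\cdot\rv|<L/2$, so $\sin(\length\kv\cdot\rv)=0$ only if $\kv\perp\rv$; this cannot hold for two pairwise linearly independent $\kv$'s at the same nonzero $\rv$. Hence $S(\rv)$ has rank $2$ on all of $\rgood$, and any piecewise-linear path in the annulus (the paper uses two segments) serves without modification.
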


The proof  of the above lemma relies on the following three observations, whose proofs are 
deferred to the appendix. In what follows, for $f: I \subset \mathbb{R} \mapsto \mathbb{R}^n$,
define the  sup-norm 
$$|f|_\infty := \sup_{t\in I} |f(t)|\, .$$ 

The first observation is that there is a bounded deterministic control $\tilde \zv$ that accomplishes the task of moving its associated connector $\tilde R = \Psi(r_0,\tilde Z)$ (recall the definition in Equation \eqref{defn:Psi}) from the initial position $r_0$ to the reference point $r_*$ at time $t = 1$.
\begin{fact} \label{control-list-one}
\emph{(Existence of a deterministic control.)} 
For any initial position $\tilde r_0 \in \rgood$, the set $\rcal \subset
C^\infty([0,1];\rgood)$ defined by
\begin{equation}\label{eq:R}
 \rcal := \Big\{\tilde R \ : \
 \tilde \rv(0) = \tilde r_0, \, \tilde \rv(1) = r_\ast,
 \Big|\frac{d\tilde \rv}{dt} \Big|_\infty \leq 5 \rhogood \Big\}
\end{equation}
is non-empty.  Furthermore, there exists an $M_1 > 0$, which does not depend on $\tilde{r}_0$, such that for
any $\tilde \rv~\in~\rcal$, there exists a continuous $\tilde
\zv \in C([0,1]; \rbb^N)$ such that
\begin{align*}
\tilde  \rv= \Psi(r_0,\tilde \zv) \quad \text{ and
}\quad |\tilde \zv|_\infty \leq M_1\,.
\end{align*}
\end{fact}

Next we notice that the map $(r, Z) \mapsto \Psi(r,Z)$ is continuous when $r$ belongs to the 
good set $\good$.  For $\tilde Z \in C([0,T]; \rbb^N)$ and constants $M, \gamma, \delta_z > 0 $, define
the set 
\begin{align} \label{eqn:fancyz2}
\mathcal{Z}(\tilde Z, M,\gamma, \delta_z):=  \Big\{  \zv \  :\   |\zv(t) -  \tilde \zv(t)|
  \leq M e^{-\gamma t} + \delta_z  \quad  \forall \ t \in [0,T] \ \Big\}.
\end{align}
\begin{fact} \label{control-list-two}
\emph{(Continuity of the map $\Psi$.)}
Fix any $\tilde r_0 \in \rgood$, $M_2, \, T > 0$ and  $\delta_r \in (0,\ep_1/2)$ where  $\ep_1$  is from \eqref{eq:defn-bad}. Suppose that $\tilde \zv \in C([0,T]; \rbb^N)$ satisfies $|\tilde \zv|_\infty \leq M_2$. 
Then there exist constants $\gamma > 0$, $\delta_0 > 0$ and $\delta_z >0$ such that
 $$|\Psi(r_0,\zv) -  \Psi( \tilde r_0, \tilde \zv)|_\infty \leq \delta_r$$ 
 for all $
(r_0, \zv)  \in \Big \{ \good_r \cap \{r: |r- \tilde r_0| \leq \delta_0  \} \Big \} \times  \mathcal{Z}(\tilde \zv, M_2, \gamma, \delta_z).$
\end{fact}
Finally, we observe that OU processes stay in a tubular neighborhood with positive probability.
\begin{fact}
 \label{control-list-three}
\emph{(Approximation by OU processes.)}
Let a set $\mathcal{Z} = \mathcal{Z}(\tilde Z, M, \gamma, \delta_z)$ be given. Then there exists a $p > 0$ such that
\begin{align*}
  \inf_{z_0 \in \good_z} \pbb_{z_0}\big\{ \zv \in \mathcal{Z} \big\}
\geq p\,
\end{align*}
where $Z = (Z_1, \ldots, Z_N)$ is the solution to \eqref{eq:defn-z} with $Z(0) = z_0$.
\end{fact}

With these observations we now prove Lemma \ref{lem:top-irred}.
\begin{proof}[Proof of Lemma \ref{lem:top-irred}] 
Fix an initial condition $x_0 = (r_0,z_0) \in \good$
and $\delta > 0$.  The argument proceeds in two steps. First we construct a
bounded deterministic control $\tilde \zv$ that accomplishes
the task of moving its associated connector $\tilde R = \Psi(r_0,\tilde Z)$ from the initial position $r_0$
to the reference point $r_*$ at time $t = 1$. Any instance of the
noise $\zv$ that approximates $\tilde \zv$
sufficiently well, as in the definition of $\mathcal{Z}$ above, will
have an associated connector $\rv = \Psi(r_0,\zv)$
that has a terminal position $\rv(1)$ near $r_\ast$. Demonstrating that such an event has positive probability is not sufficient to prove \eqref{eq:goodMinor}. This is because $\zv(1)$ may not be close to $Z_\ast = 0$. Therefore in the second step of the proof we show that, conditioned on success during the time interval $t \in [0,1]$, the noise has a positive probability of entering a small neighborhood of the origin rapidly enough so that the connector process does not move far from $r_*$.

To make these statements precise, we set some notation. Let $M_1$ be the constant from \factname \ref{control-list-one} and $m / \sqrt{\eta}$ be the radius of the $N$-sphere $\zgood$. We define $M_2 = (m / \sqrt{\eta}) + M_1$.
For a given tolerance, $\delta_r$, which is set immediately before Equation \eqref{eq:distance-zv-tilde-zv-2}, we define the event  
\begin{equation} \label{eq:defn-omega-1}
\Omega_1 := \left \{ |\rv(1) - r_\ast| \leq \delta_r, \, |\zv(t)| \leq M_2 + 1; \quad \forall \,t \in [0,1] \right \}.
\end{equation}
It is important to note that $M_2$ does not depend on the choice of $\delta_r$.

Taking $t_1 := 2$ and assuming $|R(1) - r_*| < \delta_r$ is sufficiently small, we can show that for any $t_2 > 2$, the event
\begin{equation} \label{eq:defn-omega-2}
\Omega_2 := \big\{ \ |\rv(t) - r_\ast| < \delta/2, |\zv(t)| <
\delta/2; \quad \forall \, t \in [2,t_2] \, \big\}
\end{equation}
has positive probability. The structure of the proof is therefore summarized by:
\begin{equation} \label{eq:top-irred-ineq}
\inf_{t \in [2,t_2]} \tkernel_{t}(x_0, B_{\delta}(x_*)) \geq \pbb_{x_0}\big\{\Omega_2\big\} \geq \pbb_{x_0}\big\{\Omega_2 \, | \, \Omega_1 \big\} \, \pbb_{x_0}\big\{\Omega_1\big\} \geq p_2 p_1
\end{equation}
for some  $p_1 > 0$ and $p_2 > 0$ that are independent of the initial condition $x_0 \in \good$.%

We begin by showing $\inf_{x_0 \in \good} \pbb_{x_0}\{\Omega_1\} \geq p_1$. Let $\tilde
\rv$ be a smooth path in $\rcal$ which was defined in
\eqref{eq:R}.  By \factname \ref{control-list-one} there exists a bounded deterministic control $\tilde \zv$ such that $\tilde \rv = \Psi(r_0,\tilde
\zv)$ over the interval $t \in [0,1]$. The initial value of
the control, $\tilde \zv(0)$, satisfies
$$
|z_0 - \tilde \zv(0)| \leq |z_0| + |\tilde \zv(0)| \leq (m /
\sqrt{\eta}) + M_1
$$
where we recall that $m / \sqrt{\eta}$ is the radius of $\good_z$. In order to apply \factname \ref{control-list-two} we set $M_2 = (m / \sqrt{\eta}) + M_1$ and $T = 1$ while noting that $\tilde \rv(0) = r_0$. Then for a given $\delta_{r} > 0$, there exist positive constants $\gamma_1$ and $\delta_{z,1}$
such that if an instance $\zv$ of the noise satisfies
\begin{equation} \label{eq:distance-zv-tilde-zv}
|\zv(t) - \tilde \zv(t)| \leq M_2 e^{-\gamma_1 t} + \delta_{z,1}, \,
\forall \, t \in [0,1]
\end{equation}
then the corresponding connector process $\rv = \Psi(r_0,
\zv)$ satisfies
$$
|\rv(t) - \tilde \rv(t)| \leq \delta_{r}, \, \forall \, t \in [0,1].
$$
From \factname \ref{control-list-three}, it follows that 
\begin{equation*}
p_1:=\pbb_{z_0} \Big \{\zv : |\zv(t) - \tilde \zv(t)| \leq M_2 e^{-\gamma_1 t} + \delta_{z,1}, \,
\forall \, t \in [0,1]  \Big \}>0
\end{equation*} and $p_1$
does not depend on $z_0$ or $r_0$. We note that by virtue of the proof of \factname \ref{control-list-two} $\delta_{z,1}$ can be chosen to be less than or equal to 1.  Setting $M = M_2 + 1$ we have shown that
$\inf_{x_0 \in \good} \pbb_{x_0}\{\Omega_1\} \geq p_1$. 

Next we prove that $\inf_{x_0 \in \good} \pbb_{x_0}\{\Omega_2 \ | \
\Omega_1 \} > 0$. As mentioned earlier
 we must show that ensuing at time $t=1$, it is
possible to rapidly bring the noise near the origin without
significantly perturbing $\rv$. To this end, we extend the
previous deterministic control $\tilde \zv$ to include the
definition $\tilde \zv(t) = \zerov$ for all $t \in [1, t_2]$. We
also extend the definition of the associated connector so that $\tilde \rv =
\Psi(r_0, \tilde \zv)$ is now well-defined over the full
interval $t \in [0,t_2]$. By hypothesis, $\tilde \rv(1) = r_*$ is
a global minimum of the spring potential and therefore the controlled process
experiences zero forcing from both the controlled noise and the
spring potential. It follows that $\tilde \rv(t) = r_*$ for all $t \in [1,
t_2]$.

We seek to apply \factname \ref{control-list-two} again to show that
$\rv$ remains close to $r_*$ for all $t \in [1,t_2]$. Even though 
$\zv(1)$ is not necessarily close to the control initial value $\tilde \zv(1) =
\zerov$, conditioned on $\Omega_1$, $|\zv(1)| \leq M_2 + 1$.
At this point, we fix the value of $\delta_r > 0$ given in the definition of $\Omega_1$. By \factname \ref{control-list-two}, there exist positive constants $\delta_{z,2} \in  (0,1/2)$,  $\gamma_2 > 0$ and $\delta_{r} > 0$ such that if the connector process
satisfies $|\rv(1) - r_*| \leq \delta_{r}$, and if an instance of the
noise satisfies
\begin{equation} \label{eq:distance-zv-tilde-zv-2}
|\zv(t)| \leq (M_2 + 1) e^{-\gamma_2 (t-1)} + \delta_{z,2}, \ \forall \ t \in
[1,t_2],
\end{equation}
we have $|\rv(t) - r_*| \leq \delta/2, \ \forall \ t \in [1,t_2]$.
Conditioning on $\Omega_1$ and using the Markov property of the
system to shift time values appropriately, \factname \ref{control-list-three} ensures that
the noise satisfies \eqref{eq:distance-zv-tilde-zv-2} with
probability $p_2 > 0$.

It remains to require that $|Z(t)| < \delta / 2$ for all $t \in
[2,t_2]$.  From \eqref{eq:distance-zv-tilde-zv-2}, it suffices to find
a $\gamma_3 \geq \gamma_2$ sufficiently large that
$\exp(-\gamma_3(t-1)) + \delta_{z,2} \leq \delta/2$ for all $t \in
[2,t_2]$.  Indeed, this is the case if we choose $\gamma_3 \geq \ln\big(\frac{\delta}{2} - \delta_{z,2}\big)^{-1}$
and we are done.
\end{proof}

In order to complete the proof of the Uniformly Accessible Neighborhood Condition of Lemma \ref{thm:two-conditions} we need to extend Lemma \ref{lem:top-irred} to apply to all initial conditions in $\cntr$. To do this, we need the particle separation property from Lemma~\ref{lem:nonlinear-near-zero}.
\begin{lemma}[Topological irreducibility on $\cntr$] \label{lem:top-irred-C}
Given
  a $\delta > 0$, there exists a $t_1' >0$ so that for any $t\geq t_1'$
  there is an $\alpha_1' > 0$ with
  \begin{align*}
    \inf_{x_0 \in \cntr} \tkernel_{t}(x_0, B_{\delta}(x_*)) \geq \alpha_1'
  \end{align*}
\end{lemma}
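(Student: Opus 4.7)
The strategy is to decompose $\cntr$ as $\bad \cup (\cntr \cap \good)$, handle each piece, and paste them together by the strong Markov property. For $\xv_0 \in \cntr \cap \good$, Lemma~\ref{lem:top-irred} applies directly at any $t \geq t_1$. One must only verify the inclusion $\cntr \setminus \bad \subseteq \good$: the bound $V(\xv) \leq 2R_0^2$ that defines $\cntr$ forces $|\rv| \leq \sqrt{2}R_0$ and $\|\zv\| \leq R_0/\sqrt{\eta}$, so the complementary constraint $|\rv| \geq \ep_1$ places the point in $\good$, provided $R_0/\sqrt{\eta} \leq M$, which holds by the choice of $M$ in \eqref{eq:defn-good}.

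For $\xv_0 = (\rv_0, \zv_0) \in \bad$ one has $0 < |\rv_0| < \ep_1 \leq \epsilon_0$ and $\|\zv_0\|$ bounded, placing $\xv_0$ in the regime of Corollary~\ref{cor:nonlinear-near-zero}. With probability at least $\alpha > 0$ (independent of $\xv_0 \in \bad$), the corollary produces a stopping time $\tau \leq 1$ with $|\rv(\tau)| \geq \ep_1$ and $\|\zv(\tau)\| < \tilde M$; the constants $\ep_1$ and $\tilde M$ were chosen precisely so that $\tilde M \leq M$, while $|\rv(\tau)| \leq \sqrt{2}R_0$ follows from a routine Gronwall estimate on $[0,\tau] \subseteq [0,1]$ using the pathwise bound \eqref{eq:U-estimate-omega-by-omega} and the assumptions on $\nabla\Phi$. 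Thus $\Xv(\tau) \in \good$ on this event.

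Setting $t_1' := t_1 + 1$ with $t_1$ from Lemma~\ref{lem:top-irred}, the strong Markov property applied at $\tau$ gives, for any $t \geq t_1'$,
\begin{align*}
  \tkernel_t(\xv_0, B_\delta(\xv_*)) &\geq \Ex{\xv_0}{\one_{\{\tau \leq 1,\, \Xv(\tau) \in \good\}}\, \tkernel_{t-\tau}(\Xv(\tau), B_\delta(\xv_*))} \\
  &\geq \alpha \cdot \inf_{s \in [t-1,\, t]}\, \inf_{\yv \in \good} \tkernel_s(\yv, B_\delta(\xv_*)).
\end{align*}
Because $[t-1, t]$ is a closed subinterval of $[t_1, \infty)$, Lemma~\ref{lem:top-irred} applied with $I = [t-1,t]$ bounds the inner double infimum below by a positive constant $\alpha_1$, and we take $\alpha_1' := \alpha\, \alpha_1$. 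For $\xv_0 \in \cntr \cap \good$ the direct bound $\tkernel_t(\xv_0, B_\delta(\xv_*)) \geq \alpha_1$ already holds, so the same $\alpha_1'$ works for every $\xv_0 \in \cntr$.

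The main obstacle I expect is the bookkeeping step embedded in the second paragraph: verifying that on the corollary's success event the process has actually reached $\good$ and not merely a nearby set. This rests on the internal consistency of the constants $\ep_1$, $M$, $\tilde M$, and $\eta$ introduced earlier in the paper, and is the point where the \emph{good}/\emph{bad} partition and the Lyapunov function must be tuned to agree.
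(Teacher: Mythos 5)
Your proof is correct and follows the same decomposition the paper uses: handle $\cntr\setminus\bad\subseteq\good$ directly by Lemma~\ref{lem:top-irred}, chain through Corollary~\ref{cor:nonlinear-near-zero} and the strong Markov property on $\bad$, and set $t_1'=t_1+1$ with $\alpha_1'=\alpha\alpha_1$. The only superfluous step is the ``routine Gronwall estimate'': on the corollary's success event $\|\zv\|\leq\tilde M$ holds throughout $[0,1]$, so $\tau$ is simply the first exit of $\rv$ from the $\ep_1$-ball and continuity gives $|\rv(\tau)|=\ep_1\leq\sqrt{2}R_0$ immediately.
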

\begin{proof}
  Set $t_1'=t_1 +1$ where $t_1$ is the constant from
  Lemma~\ref{lem:top-irred} and let $\tau~:=~\inf\{t > 0 \ : \ (R(t),Z(t)) \in \good\}$. Now for any $t\geq t_1'$ and fixed $x_0 \in \bad$ we have
  \begin{align*}
    \cP_t(x_0,B_\delta(x_*)) &\geq \Big(\mathbb{P}_{x_0}\!\left\{ X_t \in B_{\delta}(x_*) | \tau \leq 1\right\}\Big) \Big(\mathbb{P}_{x_0}\{\tau \leq 1\}\Big)\\
	&\geq \Big( \inf_{x
      \in \good} \inf_{s\in[0,1]} \tkernel_{t-s}(x, B_{\delta}(x_*))
    \Big) \Big( \mathbb{P}_{x_0}\{ \tau \leq 1\} \Big) \geq \alpha_1 \mathbb{P}_{x_0}\{ \tau \leq 1\} 
  \end{align*}
  where $\alpha_1$ is from Lemma~\ref{lem:top-irred}. Finally, we take the $\inf$ over all initial conditions $x_0 \in \bad$. Applying Lemma~\ref{lem:nonlinear-near-zero} with $m = \psi(\phi(\rhogood))$ and $M = m/\sqrt{\eta}$, we conclude there exists an $\alpha > 0$ such that
\begin{equation*}
	\inf_{x_0 \in \bad}\cP_t(x_0,B_\delta(x_*)) \geq \alpha_1 \inf_{x_0 \in \bad} \mathbb{P}_{x_0}\{ \tau \leq 1\} \geq \alpha_1 \alpha > 0.
\end{equation*}
Setting $\alpha_1'=\alpha\alpha_1$ completes the proof.
\end{proof}

\subsection{Measure Theoretic Irreducibility via H\"ormander's Condition}\label{sec:Hormander}
\begin{lemma}[Absolute continuity of the transition density] \label{lem:meas-irr}
Let $\{X(t) =
(\rv(t),\zv(t) )\}_{t\geq 0}$ be a Markov process with transition kernel
$\cP_t(x, U)$. Then for any $t > 0$, there exists a smooth function $p_t(x, y)$, such that
\begin{equation*}
  \cP_t(x, U) = \int_U p_t(x,y) dy
\end{equation*}
for every $U \in \borel{\cntr}$, where
$p_t(x,y)$ is jointly continuous in $(x,y) \in \cntr \times \cntr$.
\end{lemma}
\begin{remark}
  In fact, the system has a density for all $(x,y) \in
  \xbb \times \xbb$. However, due to the periodicity of our
  forcing, proving this would require an additional small argument. Since we
  do not need this fact, we refrain.
\end{remark}
\begin{proof}
  The claim follows from a now classical theorem of H\"ormander which
  states that if a diffusion on an open manifold satisfies a certain
  algebraic condition then $L_1=\partial_t -\gen$ and $L_2=\partial_t
  -\gen^*$ are both hypoelliptic in $\cntr$ where $\gen$ is the generator of the
  diffusion $\Xv(t)$ and $\gen^*$ is its adjoint. A combination of
  It\^o's formula and the fact that we have shown that the
  singularities of the potential are unattainable demonstrates that
  $L_1u =0$ and $L_2u =0$ have distribution-valued solutions.
  Hypoellipticity of the operators ensures first that these distribution-valued
  solutions are in fact smooth functions.  Furthermore, hypoellipticity implies the existence of
  fundamental solution, which in turn yields continuity in the second variable throughout
  the center of the space $\cntr$.

  The fact that the density is jointly
  continuous follows after a little more work. The argument is laid
  out in its entirety for $\rbb^N$ valued diffusions in Section 7.4 of
  \cite{stroock08PDE}. In particular, see Theorem~7.4.3 and Theorem~7.4.20.
  Essentially, the same proofs follow in our setting since we have
  shown the system is a well defined diffusion on the manifold $\xbb$ with
  distribution-valued solution. Hypoellipticity and the properties which follow
  are local statements, and therefore still apply. The needed results in the general setting, as
  opposed to $\rbb^N$, can be found in Chapter~22 of
  \cite{hormander85III}, noting in particular
  Theorem 22.2.1. However, the
  presentation in \cite{stroock08PDE} is closer to the exact statements we
  need.

  We now turn to the explicit calculations needed to show that
  H\"ormander's condition is satisfied. We recast the
  system of equations \eqref{eq:defn-z} and \eqref{eq:defn-r} as
  \begin{equation*}
    d \Xv(t) = A(\Xv(t)) \,dt+ B d\Wv(t)
  \end{equation*}
where $A(x) \in \rbb^{2+N}$ and $B\in \rbb^{(2+N) \times (2+N)}$ with
\begin{align*}
  A(x) =
  \begin{pmatrix}
    - \nabla \Phi(r) + S(r) z \\
    - \length^2\nu |\kv|^2 \zv
  \end{pmatrix}\,, \quad
  B =
  \begin{pmatrix}
    0 & 0 \\
    0 & \tilde B
  \end{pmatrix}\;.
\end{align*}
where $\tilde B$ is an $N \times N$ diagonal matrix with diagonal
entries $\sqrt{2 \beta \nu} \lambda \sigma_\kv$.
 In this notation, the generator $\gen$ of the
diffusion is given in terms of a test function $\f$ by
\begin{align*}
  (\gen\f)(x) = (A \cdot \nabla)\f(x) +\frac12 \sum_{\kv \in
    \modeset}(B_{\kv} \cdot \nabla)^2\f(x)
\end{align*}
where $B_\kv$ is the column of $B$ associated with the mode
direction $\kv\in \modeset$.

For two vector fields $A,B$ let $[A,B] := AB - BA$
denote their the commutator or Lie bracket.
In our simplified setting
where $B_\kv$ is a constant vector-field one has
\begin{align*}
  \left[A(\xv), B_\kv\right] = \frac{\partial}{\partial z_\kv} A(\xv)
  =
  \begin{pmatrix}
    \sin(\length\kv\cdot \rv) \frac\kvperp{|\kv|} \\ - \length^2
    \nu\kvsq\, \ev_\kv
  \end{pmatrix}
\end{align*}
where $\ev_\kv$ is the is the unit basis vector in $\rbb^N
=\rbb^{|\modeset|}$ associated to the mode direction $\kv \in
\modeset$. Moreover all the iterated Lie brackets of
$B_\kv$ and $A(x)$ are $0$. Thus to satisfy the
H\"ormander's condition at the point
$x$, it is required that
\begin{align*}
  \text{span}\Big\{
  B_\kv,[A(x),B_\kv] : \kv \in \modeset\Big\}=\rbb^{2+N}.
\end{align*}
The set $\{[A(x),B_\kv]\}_{\kv \in \modeset}$ will span
$\rbb^{2+N}$ if and only if the set $\{\sin(\kv\cdot r)
\kvperp\}_{\kv \in \modeset}$ spans $\rbb^2$ since the set $\{ \ev_\kv :
\kv \in \modeset\}$ spans $\rbb^N$. We recall that by assumption
$\modeset$ contains at least three pairwise independent vectors
which we label $\kv_1,\kv_2$, and $\kv_3$. One may note that due to
the periodicity of the forcing, $\sin(\length\kv \cdot r)=0$ for
all $r \in L \zbb^2$.  Taking $L \gg \rho_0^2$ will ensure that all of
these points lie outside of $\cntr$. Thus restricting to $x \in
\cntr$ at least two of $r \cdot \kv_i$ are nonzero and the lemma is
proved.
\end{proof}

\subsection{Ergodicity of generalizations}
\label{sec:generalizations}

In the derivation of the model equations \eqref{eq:defn-z}
and \eqref{eq:defn-r} we imposed the simplifying assumption that the
center of mass $\mv(t) := \frac{1}{2} (\xv_1(t) + \xv_2(t))$ is held
at zero (see Appendix).  This greatly simplified the presentation
and did not affect the conclusion that the bead-spring system has an
ergodic connector process $\rv(t)$.  Indeed the fluid velocity term
with nonzero $\mv(t)$ is given by Eq. \eqref{eq:nonzero-mv}:
\begin{align*}
& \frac{1}{2}[\uv(\xv_1(t),t) - \uv(\xv_2(t),t)]\\
& \nonumber \qquad = \sum_{\kv \in \modeset} \left[\cos(\length
\kv\cdot\mv) Z_\kv - \sin(\length \kv\cdot\mv) Y_\kv
\right]\sin(\length\kv \cdot \rv) \frac{\kvperp}{|\kv|}
\end{align*}
where the $\{Y_\kv\}$ are a second set of OU-processes defined
exactly as the $\{Z_\kv\}$.

Because the $\mv$ terms appear inside of cosines and sines, there is
no new significant contribution to the Lyapunov function
calculation.  For the H\"ormander condition, the additional terms in
the coefficients of the noise introduce more ``dead spots'' in the
forcing, but still one needs only \emph{four} pairwise linearly
independent vectors $\kv_i$ in the mode set $\modeset$ to ensure
that at least two of the vectors
$$
\big\{\left[\cos(\length \kv_i \cdot M) - \sin(\length \kv_i \cdot
M)\right] \sin(\length \kv_i \cdot R)\kv_i^\perp \big\}
$$
are nonzero.  This guarantees the existence of a continuous
transition density and it remains to show the $\delta$-ball
controllability as in Lemma \ref{lem:top-irred}.  While the
calculation is more involved, the principle of identifying the
region of good control $\good$, where the coefficients of the
$\rv$-differential equation are uniform, still applies. Furthermore,
since the differential equation for $\rv$ is linear in the
$\{Y_\kv\}$ and $\{Z_\kv\}$, we may still solve for stochastic
control explicitly in terms of the desired path $\Gamma$ as long as
the new Stokes matrix is non-degenerate.  Again, this is guaranteed
by the hypothesis that $\modeset$ contains at least four pairwise
linearly independent vectors.

\bigskip

\section{Acknowledgements}

The authors would like to thank Peter March, Greg Forest, Andrew Stuart and Davar Khoshnevisan for helpful conversations and suggestions during the
development of this paper. We would also like to thank the anonymous referees for their careful reading and thoughtful comments. All three authors wish to thank the NSF for its support through grant DMS-0449910,
DMS-0854879 and DMS-1107070.

\appendix

\section{Derivation of the model} \label{sec:model}

In the overdamped, highly viscous regime, it is reasonable to
neglect the nonlinear term in Navier-Stokes equations
\cite{1989-ottinger-rabin}. Following \cite{1986-walsh},
\cite{1992-daprato-zab}, \cite{1999-dalang} and \cite{2006-mckinley}
we have the stochastic PDE given in Section 1, Eq.
\ref{eq:defn-stokes},
\begin{align*}
\partial_t \fluidv(x,t) - \viscosity\Delta
\fluidv(x,t) + \nabla p(x,t) &= F(dx, dt), \quad
\nabla \cdot \fluidv(x,dt) = 0
\end{align*}
with periodic boundary conditions on the rectangle $ [0,L]
\times [0,L]$ where $L$ is presumed to be very large. For this development (see also \cite{2002-sigur-inertial}) we assume that the space-time forcing is a mean zero complex-valued Gaussian process with covariance
\begin{equation*}
\E{F^\alpha(x,t) \overline{F^\beta(y,s)}} = (t \wedge s) 2 k_B T \nu
\delta_{\alpha \beta} \Gamma(x - y)
\end{equation*}
where $\alpha, \beta \in \{1, 2\}$ and $\delta_{\alpha \beta}$
is a Kronecker delta function. It follows that
\begin{equation*}
F(x,t) = \frac{\sqrt{2 k_B T \nu}}{L} \sum_{\kv \in \zbb^2 \setminus \zerov} e^{\length i \kv \cdot x} \sigma_{\kv} B_{\kv}(t) \;
\end{equation*}
where $\{B_k\}$ is a collection of complex-valued 2-d Brownian motions and the coefficients $\sigma_\kv$ are related to the spatial correlation function $\Gamma$ through the Fourier relation $
\Gamma(x) = \frac{2}{L^2} \sum_{k \in \zbb^2 \setminus \{\zerov\}}
e^{\lambda i \kv \cdot x} \sigma_{\kv}^2$.
In order to construct a real-valued noise of the form \eqref{eq:defn-noise}, one can set $\sigma_{-k} = \sigma_k$ and $B_{-k} = \overline{B_k}$ and for all $k$.

To compute the Fourier transform of the SPDE, we note that the transform of the noise is given by
\begin{align*}
  \int_{[0,L]^2} e^{-\length i \kv \cdot x} F(x,t) dx &= \int_{[0,L]^2}
  e^{-\length i \kv \cdot x} \frac{\sqrt{2 k_B T \nu}}{L} \sum_{\jv \in \zbb^2 \setminus \zerov}
  e^{\length i \jv \cdot x} \sigma_{\jv}
  B_{\jv}(t) dx \\
  &= \frac{\sqrt{2 k_B T \nu}}{L} \sum_{\jv \in \zbb^2 \setminus \zerov} \sigma_{\jv}
  B_{\jv}(t) \int_{ [0,L]^2} e^{-\length i (\kv - \jv) \cdot x} dx \\
  &= \sqrt{2 k_B T \nu} L \sum_{\jv \in \zbb^2 \setminus \zerov}  \sigma_{\jv}
  B_{\jv}(t) \delta_{\kv \jv} = \sqrt{2 k_B T \nu} \,L \,\sigma_{\kv} \,B_{\kv}(t).
\end{align*}
The SPDE transforms into the infinite dimensional system
\begin{align}
\label{eq:stokes-hat-a} d \fluidvhat_{\kv}(t) + \length^2 \viscosity
|\kv|^2 \fluidvhat_{\kv}(t) + \length i \kv \hat p_{\kv}(t)&=
\sqrt{2 k_B T \nu} L \sigma_{\kv} d B_{\kv}(t), \\
\label{eq:stokes-hat-b} \length i \kv \cdot \fluidvhat_k(t) &= \zerov.
\end{align}
For the sake of completing the formal argument, suppose for the
moment that the forcing term is smooth with derivative $f$. By
taking the dot product of $\kv$ with the terms of equation
(\ref{eq:stokes-hat-a}), the first two terms vanish -- via
incompressibility condition (\ref{eq:stokes-hat-b}) -- leaving the
identity
\begin{equation}
\length i |\kv|^2 \hat p_{\kv}(t) = \sqrt{2 k_B T \nu} L
\sigma_{\kv} \kv \cdot f(t).
\end{equation}
Substituting back into (\ref{eq:stokes-hat-a}) and gathering
$f(t)$ terms on the right-hand side yields
\begin{equation}
  \label{eq:stokes-hat-proj-one} d \fluidvhat_{\kv}(t) +
  \length^2  \viscosity |\kv|^2 \fluidvhat_{\kv}(t) = \sqrt{2 k_B T \nu} L \sigma_{\kv}
  \Big(f(t) - \frac{\kv \cdot f(t)}{|\kv|^2} \kv \Big).
\end{equation}
The projection on the right hand side has two standard
representations:
$$
f - \frac{\kv \cdot f}{|f|^2} \kv = \Big(I - \frac{k
    \otimes \kv}{|\kv|^2}\Big) f = \frac{f \cdot
  \kvperp}{|\kv|^2} \kvperp,
$$
where $\kvperp := \vect{-k_2}{k_1}$. Applying Duhamel's principle
and assuming initial condition is taken from the stationary distribution, we have the following representation for solutions to the fluid mode equations
\begin{align*}
\fluidvhat_\kv(t) &= e^{-\length^2 \viscosity \kvsq t} \fluidvhat_\kv(0) + \sqrt{2 k_B T \nu} \sigma_{\kv} L \int_0^t e^{-
\length^2 \viscosity \kvsq (t-s)} \projop d B_\kv(t) \\
&= \projop \zeta_{\kv}(t)
\end{align*}
where we define $\zeta_{\kv}$ to be the appropriate complex valued 2-d Ornstein-Uhlenbeck process,
$$
d \zeta_{\kv}(t) = -\lambda^2 \nu |\kv|^2 \zeta_{\kv}(t) dt + \sqrt{2
k_B T \nu} L \sigma_\kv d B_{\kv}(t)
$$
with $\zeta_{\kv}(0)$ normally distributed according to the respective stationary distributions for each $\kv$.
We therefore have the solution for the fluid velocity field,
\begin{align*}
\uv(x,t) &= \frac{1}{L^2} \sum_{\kv \in \zbb^2 \setminus
\zerov} e^{\lambda i \kv \cdot x} \projop \zeta_\kv = \frac{1}{L^2} \sum_{\kv \in \zbb^2 \setminus \zerov} e^{\lambda i
\kv \cdot x} \frac{\zeta_{\kv} \cdot \kvperp}{|\kv|^2} \kvperp.
\end{align*}
After defining $\xi_{\kv} := \frac{1}{L^2} \frac{\zeta_{\kv} \cdot
\kvperp}{|\kv|}$, we have the complex valued 1-$d$ OU processes that drive the
dynamics
$$
d \xi_\kv(t) = - \frac{4 \pi^2 \nu |\kv|^2}{L^2} \xi_\kv(t) dt +
\frac{\sqrt{2 k_B T \nu} \sigma_{\kv}}{L} d W_\kv(t)
$$
Imposing the condition that we require real-valued solutions, after
Fourier inversion we have the following trigonometric expansion for
2-$d$ stochastic Stokes
\begin{equation} \label{eq:fluid-defn}
\uv(x,t) = \sum_{\kv \in \zbb^2 \setminus \zerov} \left(\cos
(\length \, \kv \cdot x) Y_\kv + \sin(\length \, \kv \cdot x)
Z_\kv \right) \frac{\kvperp}{|\kv|}.
\end{equation}
where the $Y_\kv$ and $Z_\kv$ are the real and imaginary parts of $\xi$ respectively.

In this paper, we study the dynamics of the two beads in normal
coordinates: $\mv(t) = \frac{1}{2} (\xv_1(t) + \xv_2(t))$ and
$R(t) = \frac{1}{2} (\xv_1(t) - \xv_2(t))$,
\begin{align*}
\ddt \mv(t) &= \frac{1}{2} [\uv(\xv_1(t),t) + \uv(\xv_2(t),t)] \\
\ddt \rv(t)  &= - \nabla \Phi(\rv(t)) +
\frac{1}{2}[\uv(\xv_1(t),t) - \uv(\xv_2(t),t)].
\end{align*}
In light of equation \eqref{eq:defn-stokes}, we may write the radial
process and the noise together as a Markovian system of SDE with two
degenerate directions. In order to write the system in this form, we
first record the identity
\begin{align}
& \label{eq:nonzero-mv} \frac{1}{2}[\uv(\xv_1(t),t) - \uv(\xv_2(t),t)]\\
& \nonumber \qquad = \sum_{\kv \in \modeset} \left[\cos(\length
\kv\cdot\mv(t)) z_\kv(t) - \sin(\length \kv\cdot\mv(t)) y_\kv(t)
\right]\sin(\length\kv \cdot \rv(t)) \frac{\kvperp}{|\kv|}.
\end{align}
For the majority of the paper, we used the simplification $\mv(t) =
0$ for all $t$. This does not have any effect on the ergodic results
as is discussed in Section \ref{sec:generalizations}, but it does
significantly streamline the presentation. Altogether we have the
definition of the dynamics given in Section 1, Eq.~\eqref{eq:defn-r}.

\subsection{Degeneracy when there is no repulsive force}
\label{sec:nearZero}

Putting aside existence and uniqueness for a moment, we make a quick
calculation that reveals a degeneracy for the bead-spring model with a Hookean or FENE
spring potential with truncated stochastic Stokes forcing.  Namely, under mild
conditions, when the two beads come close together, the fluid velocity vectors they respectively see will become so correlated, the beads will never separate.

\begin{prop}[Degeneracy of the non-repulsive case]\label{thm:hookean}
Let $\rv$ and the family $\{Z_\kv\}_{\kv \in \modeset}$ satisfy the
system of differential equations \eqref{eq:defn-z} and
\eqref{eq:defn-r}. Let the spring potential be given by $\Phi(r) = \frac{\gamma}{2} |r|^2$ or $\Phi(r) = \Phifene(r)$ as defined by \eqref{eq:defn-fene}. Then there exists a $\gamma_0$ so that if $\gamma > \gamma_0$ then
$$
\lim_{t \to \infty} \rv(t) = \zerov
$$
almost surely.
\end{prop}

\begin{proof} 
We first note that for all $r$ satisfying $|r| \in (0, \rhomax)$
\begin{align*}
	\nabla \Phifene(r) \cdot r &= \frac{\gamma |r|^2}{1 - |r|^2/\rhomax^2} \geq \gamma |r|^2.
\end{align*}
It follows that both the Hookean and FENE potential cases, the process $|\rv(t)|^2$ satisfies the following pathwise ODE bound,
\begin{align*}
\ddt |\rv(t)|^2 &= - 2 \nabla \Phi(\rv(t)) \cdot \rv(t) + 2\sum_{\kv \in \modeset} \sin\!\left(\length \kv \cdot \rv(t)\right) \frac{\kvperp \cdot \rv(t)}{|\kv|} Z_\kv(t) \\
&\leq - 2 \gamma |\rv(t)|^2 + 2 \length \sum_{\kv \in \modeset} |\kv \cdot \rv(t)| |\kvperp \cdot \rv(t)| \frac{|Z_\kv(t)|}{|\kv|} \\
&\leq - 2 \gamma |\rv(t)|^2 + 2 \length |\rv(t)|^2 \|Z(t)\|_1
\end{align*}
where $\|Z(t)\|_1 := \sum_{\kv \in \modeset} |\kv| |Z_\kv(t)|$.

This differential inequality implies
\begin{equation} \label{eq:ineq-degenerate-hookean}
|\rv(t)|^2 \leq |R(0)| \exp\left[- 2\gamma t + 2\length
  \int_0^t \|Z(s)\|_1 ds\right] \;.
\end{equation}
Recall that in its stationary distribution, the law of each $Z_k(t)$ is normal with mean zero and variance $\beta \sigma_k^2 / |k|^2$ and therefore $\E{|Z_k|} = \sqrt{\frac{2 \beta}{\pi}} \frac{\sigma_k}{|k|}$. By the Law of Large Numbers
\begin{equation}
	\lim_{t \to \infty} \frac{1}{t} \int_0^t |Z_k(s)| ds = \sqrt{\frac{2\beta}{\pi}} \, \frac{\sigma_k}{|k|}
\end{equation} 
almost surely and so
$$
\lim_{t \to \infty} \frac{1}{t}\int_0^t \|Z(s)\|_1 ds
= \sqrt{\frac{2 \beta}{\pi}} \sum_{\kv \in \modeset} \sigma_k
$$
almost surely. Since we are only considering a finite number of modes, the above sum is finite.  Therefore, if $\gamma > \gamma_0 := \lambda\sqrt{\frac{2 \beta}{\pi}}\sum_k \sigma_k$, then $|\rv(t)|^2 \rightarrow 0$
almost surely as $t\rightarrow \infty$.

\end{proof}

\subsection{A note on the mollifier function $\psi$}
\label{sec:psi}

Recall the mollifier function $\psi$ that appeared in the Lyapunov function \eqref{eq:defn-lyapunov} and in the global estimate in the existence and uniqueness Proposition \ref{prop:exist-unique},
\begin{equation*}
\psi(x) :=  \left\{\begin{array}{cl} 0, & 0 \leq x \leq a,\\
        c \, (x - a) \exp \big(\frac{-1}{(x-a)^2}\big), & x > a. \end{array}\right.
\end{equation*}
where $a = \Phi(\rho_0)$. Since $\lim_{x \to 0} x^{\alpha}e^{-1/x^2} = 0$ for any $\alpha \in \rbb$, it follows that for any $n \in \mathbb{N}$, the $n$-th derivative of $\psi$ satisfies $\lim_{x \to a}\psi^{(n)}(x) = 0$. Therefore $\psi$ and all of its derivatives are continuous for all $x \in \rbb_+$. Furthermore, we have the following proposition.

\begin{prop} \label{prop:psi-bounds} There exists a constant $C > 0$ such that
	\begin{equation} \label{eq:psi-bounds}
		\psi(x) \leq x \psi'(x) \leq \psi(x) + C
	\end{equation}
	for all $x \in \rbb_+$.
	Furthermore, $\|\moll'\|_\infty < \infty$
\end{prop}
\begin{proof}
	This is trivially true for all $x \in [0,a]$, since $\psi(x) = x \psi'(x) = 0$ for all $x$ in this range. For $x > a$, we compute that $x \psi'(x) = \psi(x) + r(x)$ where the remainder term is given by $r(x) = c \left(a + 2 x (x-a)^{-2}\right) \exp\!\left(-(x-a)^{-2}\right)$.  This remainder term is always positive, is continuous for all $x > a$ and satisfies $\lim_{x \to a} r(x) = 0$ and $\lim_{x \to \infty} r(x) = a$.  It follows that there exists a $C > 0$ for all $x \geq a$ we have $0 \leq r(x) \leq C$.  The inequalities \eqref{eq:psi-bounds} follow.
\end{proof}

\subsection{Existence, uniqueness of the bead-spring model}
\label{sec:exist-unique}

We confirm the global existence and uniqueness of the bead-spring
model proposed by Equations \eqref{eq:defn-z} and \eqref{eq:defn-r}. 
Since we assume that $|\modeset| = N \in \mathbb{N}$ throughout the main part of this paper, we retain that assumption here.

\begin{prop} \label{prop:exist-unique}
Suppose that the spring potential $\Phi$ satisfies Assumption
\ref{a:potential-singularity}. Let
$\{Z_\kv(t):t\geq0\}_{\kv \in \modeset}$ be a solution to the family
of SDEs \eqref{eq:defn-z} with initial conditions $Z_k(0) = z_k \in \rbb$ for all $k \in \modeset$.
Then, almost surely, there exists a unique global solution to the 2-dimensional ODE
\begin{equation} \label{eq:defn-r-appendix}
\ddt \rv(t)  = - \nabla \Phi(\rv(t)) + \sum_{\kv \in \modeset} \sin\!\left(\length \, \kv \cdot \rv(t)\right) \frac{\kvperp}{|\kv|} Z_\kv(t)
\end{equation}
with the initial condition $\rv(0) = r_0 \in \dcal \setminus \{0\}$. 
\end{prop}

\begin{proof}
Let $\ep > 0$ be given and define the stopping stopping time 
$\tau_\ep := \inf\{t > 0 \, : \, |R(t)| < \ep \text{ or } \moll(\Phi(R(t))) > \ep^{-1}\}$
where $\moll$ is the function defined in the previous section.
We will first prove there exists a unique stopped solution $R(t \wedge \tau_\ep)$ to \eqref{eq:defn-r-appendix}.  Subsequently we show that $\sup \{\tau_\ep\} = \infty$ almost surely.

We rewrite \eqref{eq:defn-r-appendix} in terms of the Stokes matrix defined by \eqref{eq:defn-stokes-matrix},
\begin{equation} \label{eq:r-defn-concise}
	\ddt \rv(t) = - \nabla \Phi(\rv(t)) + S(\rv(t))\zv(t).
\end{equation}
In order to apply the standard Picard-Lindel\"of Theorem (see for
example, \cite{book-hale}), we think of the vector $Z(t) = (Z_0(t), Z_1(t), \ldots, Z_N(t))$ 
as a time-inhomogeneous coefficient. To prove that there exists a unique local solution to
\eqref{eq:defn-r-appendix} it is sufficient to show that the functions $\nabla
\Phi(r)$ and $S(r) \zv(t)$ are continuous in $\dcal \times \rbb_+ \setminus \{0 \times \rbb_+\}$ and locally Lipschitz in the variable $r$.  By Assumption \ref{a:potential-singularity}, this condition is satisfied by $\nabla \Phi(r)$. For the last term in \eqref{eq:r-defn-concise}, given an instance of
$\zv$, we have
\begin{align*}
	|S(r_1)\zv(t) - S(r_2)\zv(t)| &\leq \sum_{\kv \in \modeset} |\sin(\lambda \kv \cdot r_1) - \sin(\lambda \kv \cdot r_2)| |Z_\kv(t)| \\
	&\leq \lambda |r_1 - r_2| \|Z(t)\|_1
\end{align*}
where we recall $\|Z(t)\|_1 := \sum_{k \in \modeset} |k| |Z_\kv(t)|$.
The function $S(r)\zv(t)$ is continuous in $t$ almost surely since $|S(r)\zv(t_1) -
S(r)\zv(t_2)| \leq \|S(r)\|_F |\zv(t_1)-\zv(t_2)|$ and the vector OU process $Z(t)$
is continuous almost surely.%

We now show that the process cannot blow up to $\rhomax$ in finite time.  To this end we consider the process $\moll(\Phi(R(t)))$ which is constant inside a radius of size $\rho_0$ but then grows to infinity with the potential function as $|R|$ tends to $\rhomax$. By showing $\moll(\Phi(R(t)))$ is bounded above by a 1-d linear ODE, this suffices to show global existence and uniqueness.  %
For a given instance of the noise $Z(t)$, we have
\begin{equation*} 
	\ddt \moll(\Phi(\rv(t))) = \moll'(\Phi(R(t))) \left(- |\nabla \Phi(R(t))|^2 + \nabla \Phi(R(t)) \cdot [S(R(t)) Z(t)]\right)
\end{equation*}
For given values $r \in \rbb^2$ and $z \in \rbb^N$ we bound the Stokes forcing term by applying Young's inequality followed by the matrix form of Cauchy-Schwarz:
\begin{align*}
\nabla \Phi(r) \cdot (S(r) z) &\leq  \frac12 |\nabla \Phi(r)|^2 + \frac{1}{2} |S(r) z|^2 \leq  \frac12 |\nabla \Phi(r)|^2 + \frac12 \|S(r)\|_F^2 |z|^2\\& \leq  \frac12 |\nabla \Phi(r)|^2 + \frac12 N^2 |z|^2.
\end{align*}
The inequality $\|S(r)\|_F \leq N$ is given in the proof of Lemma \ref{lem:nonlinear-near-zero}. 

To estimate the first term of the mollified ODE, we consider two cases: (i) $|r| \leq \rho_0$ and (ii) $|r| > \rho_0$. In case (i), $\moll'(\Phi(r)) = 0$ and the entire term disappears.
Trivially, $ - \moll'(\Phi(r)) |\nabla \Phi(r)|^2 = 0 = - \gamma \moll(\Phi(r)).$

For case (ii), we employ the spring potential assumption \eqref{eq:spring-large-r} that for some $\gamma > 0$ if $|r| > \rho_0$ then $|\nabla \Phi(r)|^2 \geq \gamma \Phi(r)$. Furthermore, by Proposition \ref{prop:psi-bounds}, the mollifier $\moll$ satisfies $ \moll'(\Phi(r)) \Phi(r) \geq \moll(\Phi(r))$. We obtain
\begin{align} \label{eq:psi-phi-ineq}
- \moll'(\Phi(r)) |\nabla \Phi(r)|^2 \leq - \gamma \moll'(\Phi(r)) \Phi(r) \leq - \gamma \moll(\Phi(r)).
\end{align}
Altogether, we have the differential inequality
\begin{equation}
	\ddt \moll(\Phi(\rv(t))) \leq - \frac{\gamma}{2} \moll(\Phi(R(t))) + \frac{N^2}{2} \|\moll'\|_\infty |Z(t)|^2
\end{equation}
Define $Y(t)$ to be the solution to the linear ODE
\begin{equation*}
	\ddt Y(t) = - \frac{\gamma}{2} Y(t) + \frac{N^2}{2} \|\moll'\|_\infty |Z(t)|^2
\end{equation*}
with $Y(0) = \moll(\Phi(R(0))$. By definition, $\moll(\Phi(R(t)) \leq Y(t)$.  By virtue of the fact that the forcing term is positive, $Y(t) > 0$ for all $t$ and, defining $\tau_M = \inf\{t > 0 : Y(t) > M\}$, standard properties of linear ODEs and global existence of the $N$-dimensional Ornstein-Uhlenbeck imply that $\sup_{M > 0} \tau_M =\infty$.

We now show that $\sup_{\ep > 0} \tau_\ep = \infty$ almost surely by demonstrating that the $R$-dynamics do not hit zero in finite time.  The idea here is that for the connecter process to hit zero, the noise must blow up in finite time and this is not possible since our noise is bounded on any finite time interval. Indeed, by Assumption \ref{a:potential-singularity}, there exists  $\ep_0> 0$ such that $-\nabla \Phi(r)\cdot r \geq \gamma_0 > 0$ for all $r$ with $|r|< \ep_0$. Suppose $R(T) = 0$ for some $T \in \mathbb{R}_+$. From
 the above discussion and Equation \eqref{eq:defn-r-appendix} it follows
 that $\frac{d}{dt} |R(t)|^2$ is almost surely continuous.  Thus $\frac{d}{dt} |R(t)|^2 < 0$
 in a subinterval of the set $[T-\delta, T]$ for some $\delta > 0$.  Without loss of generality, we may assume that $|R(t)| < \epsilon_0$ for $[T-\delta, T]$. Let $M := \sup_{t\in[T-\delta, T]} \|Z(t)\|_1$. In this regime, we have
\begin{align*}
	\ddt |\rv(t)|^2 &= - 2 \nabla \Phi(\rv(t)) \cdot \rv(t) + 2\sum_{\kv \in \modeset} \sin\!\left(\length \kv \cdot \rv(t)\right) \frac{\kvperp \cdot \rv(t)}{|\kv|} Z_\kv(t) \\
	&\geq 2 \gamma_0 - 2 \length \sum_{\kv \in \modeset} |\kv \cdot
        \rv(t)| |\kvperp \cdot \rv(t)| \frac{|Z_\kv(t)|}{|\kv|}  \geq 2 \gamma_0 - 2 \length M N |\rv(t)|^2.
\end{align*}
However, the right-hand side is positive when $|R(t)|^2 \leq \gamma_0 / (\length M N)$, contradicting the hypothesis that $\ddt |R(t)|^2 < 0$ in a subinterval of $[T-\delta,T]$ when
$|R(t)|$ is small enough. Therefore the origin is unattainable in finite time.
\end{proof}

\subsection{The Lyapunov function}\label{sec:Lyap}

The proof for the Lyapunov estimate, Lemma \ref{lem:lyapunov}, proceeds similarly to the proof of the upper bound in the Existence and Uniqueness Proposition \ref{prop:exist-unique}. The only differences arise from the need to treat the $R(t)$ and $Z(t)$ dynamics simultaneously. For the sake of easy reference, we recall the definition of the Lyapunov function $V(r,z) = \moll(\Phi(r)) + \eta |z|^2$ where $\moll$ is defined in Section \ref{sec:psi} and $\eta$ is to be defined in the following proof.

\begin{proof}[Proof of Lemma \ref{lem:lyapunov}]
The generator $\gen$ for the Markov process $\Xv(t) :=
(\rv(t),\zv(t))$ is given by
$$
\gen := \left(-\nabla \Phi(r) + S(r) z \right) \cdot \nabla_{r} + \nu \lambda^2 \Big(\sum_{\kv \in \modeset} - \kvsq z_\kv \frac{\partial}{\partial z_{\kv}} + \beta \sigma^2_\kv
\frac{\partial^2}{\partial z_{\kv}^2}\Big)
$$
It suffices to find an $a > 0$ and $b > 0$ such that
\begin{equation} \label{eq:gen-v-ineq}
\gen V(x) \leq - a V(x) + b.
\end{equation}
From \eqref{eq:gen-v-ineq}, using Ito's formula and Gronwall's
inequality one can show that
$
(\cP_t V)(x) \leq e^{-at} V(x) + b/a
$.
Thus we have $c_0 = e^{-at}$ with $c_1 = b/a$.  The restriction on the constants $(c_0, c_1, \rhogood)$ from Theorem \ref{thm:HM-harris} (in light of of the definition of $\cntr$ in Equation \ref{eq:defn-center}) translates to the following constraint on $(a,b,\rhogood)$:
\begin{equation} \label{eq:b-restriction}
	b < \frac{1}{2} a \psi(\phi(\rhogood)) (1 - e^{-at}).
\end{equation}
Applying $\gen$ to the Lyapunov function $V$ yields:
\begin{align*}
\gen V(r,z) &= \moll'(\Phi(r)) \left(-|\nabla \Phi(r)|^2 + (S(r) z) \cdot \nabla \Phi(r)\right) \\ 
& \qquad + 2 \eta \nu \lambda^2 \sum_{\kv \in \modeset} \left(- |k|^2 z_\kv^2 + \beta \sigma_\kv^2 \right).
\end{align*}
In bounding the Stokes forcing term we must make a slightly sharper estimate than the one used in the proof of Proposition \ref{prop:exist-unique}. We apply Young's inequality (with $\delta \in (0,1)$ to be chosen below) followed by the matrix form of Cauchy-Schwarz and the inequality $\|S(r)\|_F \leq N$ which is given in the proof of Lemma \ref{lem:nonlinear-near-zero}:
\begin{align*}
(S(r) z) \cdot \nabla \Phi(r) &\leq  \frac{1}{4 \delta} |S(r) z|^2 + \delta |\nabla \Phi(r)|^2 
\leq  \frac{1}{4\delta} N^2 |z|^2 +  \delta |\nabla \Phi(r)|^2.
\end{align*}
Denoting $\kmin := \min_{\kv \in \modeset} \{|\kv|\}$ and
$\|\sigma\|_0^2 = \sum_{\kv \in \modeset} \sigma_{\kv}^2$, after collecting terms we have
\begin{align}
\label{eq:gen-v-collected} \gen V(x) &\leq - (1 - \delta) \moll'(\Phi(r)) |\nabla \Phi(r)|^2 + 2 \eta \nu \lambda^2 \beta \|\sigma\|_0^2 \\
& \nonumber \quad + (N^2 \moll'(\Phi(r)) / 4\delta - 2 \eta \nu \lambda^2 \kmin^2  )|z|^2.
\end{align}

We estimate the first term as in the proof of Proposition \ref{prop:exist-unique} equation \ref{eq:psi-phi-ineq},  
$- (1-\delta) \moll'(\Phi(r)) |\nabla \Phi(r)|^2 \leq - (1-\delta) \gamma \moll(\Phi(r))$
for all $r \in \rbb^2$.

Regardless of the value of $r$, we require that the coefficient of
$|z|^2$ in \eqref{eq:gen-v-collected} satisfy the constraint
$N^2 \moll'(\Phi(r))/4\delta - 2 \eta \nu \lambda^2 \kmin^2 \leq - \eta \gamma (1 - \delta)$, which  is true for all $\eta$ satisfying
 \begin{equation}\label{eqn:lypcons1}
 \eta \geq \frac{N^2}{2 \nu \lambda^2 \kmin^2 - \gamma (1-\delta)}\frac{\|\moll'(\cdot)\|_\infty.}{4\delta}
\end{equation}
By choosing the $\delta$ close to 1, we can ensure that the denominator is positive.  Applying these estimates, Equation \eqref{eq:gen-v-collected} becomes
\begin{equation*}
	\gen V \leq - (1 - \delta) \gamma V + 2 \eta \nu \lambda^2 \beta \|\sigma\|_0^2.
\end{equation*}

Our final restriction involves the constant terms given in
Equation \eqref{eq:b-restriction}, with $a = (1-\delta) \gamma$ and $b = 2 \eta \nu \lambda^2 \beta \|\sigma\|_0^2$. We obtain the constraint
\begin{equation}\label{eqn:lypcons2}
\eta  \leq \frac{(1 - \delta) \gamma \psi(\phi(\rhogood)) (1 - e^{- (1-\delta) \gamma t})}{4 \nu \lambda^2 \beta \|\sigma\|_0^2}.
\end{equation}
Since $ t \geq 1$ it is enough to have
$\eta \leq \frac{(1-\delta) \gamma \psi(\phi(\rhogood)) (1 - e^{- (1-\delta) \gamma })}{4 \nu \lambda^2 \beta \|\sigma\|_0^2}$.
Combining \eqref{eqn:lypcons1} and \eqref{eqn:lypcons2}, we need to find $\eta$ such that 
\begin{equation} \label{eq:defn-eta}
	\frac{N^2 \|\moll'(\cdot)\|_\infty}{4 \delta (2 \nu \lambda^2 \kmin^2 - \gamma (1-\delta))} \leq \eta \leq \frac{(1 - \delta) \gamma \psi(\phi(\rhogood)) (1 - e^{- (1-\delta) \gamma })}{4 \nu \lambda^2 \beta \|\sigma\|_0^2}.
\end{equation}
At this point, all parameters have been fixed except for the choice of the constant $c$ in the definition of $\psi$, and the choice of $\rhogood$.  By choosing $c$ to be sufficiently small, we can diminish $\|\psi'\|_\infty$ enough that the left hand side is less than $1/4$.  Subsequently we observe that regardless of the value of $c$, $\lim_{\rho \to \rhomax} \psi(\rho) = \infty$ and so we can choose $\rhogood$ in such a way that the right-hand side is arbitrarily large.  For simplicity, we pick it so that the right-hand side is 1/2. %
\end{proof}

\section{Topological Irreducibility}
\begin{proof}[Proof of \factname \ref{control-list-one} ]

Any two points $r_0$ and $r_\ast$ in $\rgood$ can be
connected by a path consisting of two parts, $r_0 \rightarrow {| r_*|} r_0/{|r_0|}
\rightarrow \ r_*$,
a line segment (connecting $r_0$ to $|r_*| \,r_0 / |r_0|$)
and then a circular arc (connecting $|r_*| \, r_0 / |r_0|$ to
$r_*$). The length of the linear segment is less than $\rho_0$ and
the length of the circular arc will be less than $\pi \rho_0$.
Qualitatively speaking, by smoothing out the corner, there exists a
smooth curve from $r_0$ to $r_*$ with arclength less than $(1 +
\pi)\rho_0$. It follows that there exists a parametrization $\tilde
\rv$ of such a curve, and furthermore, the $\rcal$ defined by Equation \eqref{eq:R} in the statement of \factname \ref{control-list-one} in non-empty.

Given this $\tilde \rv$, we consider the linear (in $\tilde
\zv$) system
\begin{equation*}
\ddt \tilde \rv(t) = - \nabla \Phi(\tilde \rv(t)) + S(\tilde \rv(t))
\tilde \zv(t)
\end{equation*}
for every $t \in [0,1]$. There exists a unique minimal norm solution
\begin{equation*}
\tilde \zv(t) = S^\dagger(\tilde \rv(t)) \Big(\nabla \Phi(\tilde
\rv(t)) + \ddt \tilde \rv(t)\Big)
\end{equation*}
where $S^\dagger := S^* (S S^*)^{-1}$ is the Moore-Penrose
pseudoinverse  \cite{ben-israel} and $S^\ast$ is the transpose of $S$. We claim that $\tilde \zv$ is continuous and therefore bounded over
the interval $t \in [0,1]$.  Indeed, by hypothesis, both $\nabla
\Phi(\tilde \rv)$ and $\ddt \tilde \rv$ are continuous, so we
only must show that $S^\dagger(\tilde \rv(\cdot))$ is continuous.

As a finite sum of sines, $S$ is a continuous function on
$\rbb^2$. It follows that both $S^*$ and $S S^*$ are continuous as
well, and $(S S^*)^{-1}$ is continuous in any domain in which its
determinant satisfies $|\det(S(r) S^*(r))| > 0$ for all $r$ in
the domain. Because $S S^*$ is a $2 \times 2$ matrix
$$
S S^* = \left(\begin{array}{cc} |S_1|^2 & S_1 \cdot S_2 \\ S_1 \cdot
S_2 & |S_2|^2 \end{array} \right)
$$
where $S_1$ and $S_2$ are the first and second rows of $S$
respectively, the determinant simplifies to $\det(S(r)) =
|S_1(r)|^2 |S_2(r)|^2 \big(1 - \cos^2(\theta(r))\big)$ where
$\theta$ is the angle between the vectors $S_1$ and $S_2$. Noting
that $\theta$ is a continuous function of $r$ while recalling that each $S_i(r)$ is continuous and that $\rgood$ is compact, it
suffices to show that that $S_1(r)$ and $S_2(r)$ are linearly
independent for all $r \in \good_\rv$. Because the row space and
column space of a matrix have the same dimension, this reduces to
showing the column rank of $S(r)$ is two.  This follows
immediately from the hypothesis that the active mode vector set
$\modeset$ contains at least three pairwise linearly independent
vectors, which we label $\kv_1$, $\kv_2$ and $\kv_3$.  Among the
three columns $\{\sin(\lambda \kv_j \cdot r) \kvperp_j\}_{j=1}^3$ at most
one of the sine coefficients is zero, leaving at least two linearly
independent columns.

We conclude that the control $\tilde \zv(\cdot)$ is
well-defined, continuous and has a magnitude which is bounded above
by
\begin{equation*}
|\tilde \zv(t)| \leq M_1 := \sup_{r \in \rgood}\|S^\dagger(r)\|_F
(|\nabla \Phi(r)| + 5 \rhogood)
\end{equation*}
for all $t \in [0,1]$.
\end{proof}
\begin{proof}[Proof of \factname \ref{control-list-two}]

Let the constants $\delta_r \in (0,\ep_1/2)$, $T>0$ and $M_2 >0$ be given. Suppose 
$\tilde \zv \in C([0,T],\rbb^N)$ is a deterministic control with $|\zv|_\infty \leq M_2$ such that $\tilde \rv = \Psi(\tilde r_0,\tilde \zv)$ satisfies $\tilde \rv(t) \in \rgood$ for all $t \in [0,T]$. 

We will show that there exist positive constants 
$\gamma$, $\delta_{0}$, and $\delta_z$
such that if $|r_0 - \tilde r_0| \leq \delta_0$ and $\zv(\cdot) \in \mathcal{Z}(\tilde \zv, M_2, \gamma, \delta_z)$, 
then
\begin{equation} \label{eq:control-r-approx-bound}
\sup_{t \in [0, T]} |\rv(t) - \tilde \rv(t)| \leq \delta_r.
\end{equation}
To this end, define $\hv(t) := \rv(t) - \tilde \rv(t)$. Then $\hv$
satisfies the integral equation
$$
\hv(t) = \hv(0) + \int_0^t \nabla \Phi(\rv(s)) - \nabla \Phi(\tilde
\rv(s))ds + \int_0^t S(\rv(s)) \zv(s) - S(\tilde \rv(s)) \tilde
\zv(s) ds
$$
As functions of $\rv$, both $\nabla \Phi$ and $S$ are
locally Lipschitz.  Let $\rgood^+ \subset \rbb^2$ be the annulus centered at the origin with inner radius $\ep_1/2$ and outer radius $\rho_0 + \ep_1/2$.  Although the deterministic control is defined so that $\tilde \rv$ stays in $\good_r$, instances of a the actual connector process $\rv$ may wander slightly out of the good region.  It is with respect to this enlarged set that we take the local Lipschitz constants, $\lambda_\Phi > 0$ and $\lambda_S > 0$ such that for all $r, \tilde r \in \good^+$,
\begin{align*}
|\nabla \Phi(r) - \nabla \Phi(\tilde r)| &\leq \lambda_\Phi |r - \tilde r|, \quad
\|S(r) - S(\tilde r)\|_F \leq \lambda_S |r - \tilde
r|.
\end{align*}
Observing that $ |S(r)z - S(\tilde r) \tilde z| \leq \lambda_S |r - \tilde
r| |z| + \|S(\tilde r)\|_F |z - \tilde z| $ for all $r, \tilde r \in \rgood^+$ yields
\begin{align*}
|\hv(t)| &\leq |\hv(0)| + \int_0^t (\lambda_\Phi + \lambda_S
|\zv(s)|) |\hv(s)| ds + \int_0^t \|S(\tilde \rv(s))\|_F |\zv(s) - \tilde \zv(s)| ds.
\end{align*}
By virtue of the assumption that $\zv \in \mathcal{Z}(\tilde \zv, M_2, \gamma, \delta_z)$, defined in \eqref{eqn:fancyz2} the second integral satisfies the bound
\begin{align*}
\int_0^t \|S(\tilde \rv(s))\|_F |\zv(s) - \tilde \zv(s)| ds &\leq \sup_{r \in \rgood} \|S(r)\|_F \int_0^t M_2 e^{-\gamma s} + \delta_z  ds,
\end{align*}
and so after simplifying we have 
$|\hv(t)| \leq \int_0^t \beta |\hv(s)| ds + g(t)$
where  $\beta = \lambda_\Phi + (2 M_2 + \delta_z) \lambda_S$ and $g(t) = \delta_0 +
\sup_{r \in \good} \|S(r)\|_F \big(\frac{M_2}{\gamma} + \delta_z
t\big)$. Using the integral form of Gronwall's Inequality yields
$
|\hv(t)| \leq g(t) + \int_0^t g(s) \beta e^{\beta (t-s)}  ds
$.
After substituting in the values of $g$ and $\beta$ and integrating, we see that for all $t \in [0,T]$,
$$
|\hv(t)| \leq \Big[\delta_0 + \sup_{r \in \rgood} \|S(r)\|_F
\Big(\frac{M_2}{\gamma} + \frac{\delta_z}{\lambda_\Phi + M_2 \lambda_S
}\Big)\Big] e^{(\lambda_\Phi + M_2 \lambda_S) T}
$$
Taking $\delta_0$ and $\delta_z$ sufficiently small while taking $\gamma$ sufficiently large yields \eqref{eq:control-r-approx-bound}.
\end{proof}

\begin{proof}[Proof of \factname \ref{control-list-three}]
	Let the constants $\gamma > 0$, $\delta_z > 0$ and $M > 0$ be given, along with $\tilde Z \in C([0,T];\rbb^N)$ satisfying $|\tilde Z|_\infty < M$. As in the proof of Lemma \ref{lem:nonlinear-near-zero} the noise vector $Z(t) = (Z_1(t), Z_2(t), \ldots, Z_N(t))$ can be written
	$\zv(t) = e^{-\Lambdav t} z_0 + \int_0^t e^{-\Lambdav(t - s)}
        B d\Wv(s)$
where $\Lambdav$ is a diagonal matrix whose entries $\{\lambda_\kv\}_{\kv \in \modeset}$ are given by $\lambda_\kv := \lambda^2 \nu |\kv|^2$ and $B$ is a diagonal matrix whose entries $\{b_k\}_{k\in \modeset}$ are given by $\sqrt{2 \beta \nu} \length \sigma_\kv$. 
	
Again view the stochastic integral as a time change of a Brownian motion. As before $M_\kv(t) := \int_{0}^t e^{\lambda_{\kv} s} b_k d W_{\kv}(s)$ is a continuous martingale with quadratic variation $\langle M_\kv, M_\kv \rangle_t = b_k^2 (e^{2 \lambda_\kv t} - 1)/ 2 \lambda_{\kv}$, we observe that for any $t > 0$, $M_\kv(t)$ has the same distribution as $\tilde W( \langle M_\kv, M_\kv \rangle_t )$ where $\tilde W$ is a standard Brownian motion. For any continuous curve $\Gamma$ with $\Gamma(0) = 0$, $\tilde T > 0$ and $\delta > 0$
\begin{equation*}
	\mathbb{P}\Big\{\sup_{t \in [0,\tilde T]} |\tilde W(t) - \Gamma(t)| \leq \delta\Big\} > \tilde p
\end{equation*}
for some $\tilde p > 0$ (see \cite{durrett} for example). Since we have assumed there are only a finite number of active modes, and the modes are independent, \factname \ref{control-list-three} follows immediately from the union bound.
\end{proof}

\bibliography{bead-spring}
\bibliographystyle{alpha}

\end{document}